\numberwithin{equation}{section}
\theoremstyle{thmstyleone}
\newtheorem{theorem}{Theorem}
\newtheorem{lemma}[theorem]{Lemma}
\newtheorem{corollary}[theorem]{Corollary}
\theoremstyle{thmstyletwo}
\newtheorem{remark}{Remark}
\theoremstyle{thmstylethree}
\newtheorem{definition}{Definition}
\newcommand{\diam}{\operatorname{diam}}
\newcolumntype{L}[1]{>{\raggedright\arraybackslash}p{#1}}
\begin{document}

%======================== Title & Authors ========================
\title[Multifractality in the Tree of Life]{Multifractality in the Tree of Life: A~Branching-Process RIFS Proof}

\author*[{1}]{\fnm{Kevin} \sur{Hudnall}}\email{kahudnall@ucdavis.edu}

\affil*[1]{\orgdiv{Biological Systems Engineering Graduate Group}, \orgname{University of California, Davis}, \orgaddress{\street{One Shields Avenue}, \city{Davis}, \state{CA}, \postcode{95616}, \country{USA}}}

%============================ Abstract ===========================
\abstract{

We study a branching-process random iterated function system (RIFS) defined by a recursive replacement of leaves by finite subtrees at strictly smaller contraction scales. This construction yields a tree-valued, infinite-depth random geometry that unifies classical branching processes and random iterated function systems while remaining distinct from both. We prove rigorously that the resulting branching-process RIFS is multifractal under explicit and mild assumptions. Two variants are analyzed: a non-anchored case with a nontrivial compact attractor, and a biologically motivated anchored case in which the invariant geometric set collapses to a point, while tangent measures obey the same multifractal law. The construction formalizes the foundational principles of nestedness, duality, and randomness in the living tree of life~\citep{HudnallDSouza2025}, yielding a minimal-condition theorem that explains the ubiquity of multifractal signatures in biological data.

}

\keywords{Random iterated function systems, Hausdorff dimension, Multifractal formalism, Gibbs measures, Scaling laws in biology}

\maketitle

%============================ Introduction =======================
\section{Introduction}\label{sec:intro}

It has often been argued that the tree of life exhibits fractal structure, with evidence drawn from phylogenies, ecological scaling laws, and morphological complexity \citep{Burlando1993,Chaline2010,Dubois1992,Green1991,Nottale2002}. Independently, three principles are widely regarded as foundational to the organization of living systems: nestedness, duality, and randomness \citep{Kimura1983,Lewontin1970,SzathmarySmith1995}. These principles are reviewed in Section~\ref{subsec:three-principles}. Here we show that, when taken together, they give rise to a natural mathematical construction whose invariant measures are necessarily \emph{multifractal}.

Random fractal models have long provided mathematical structure to aspects of this picture. Branching processes \citep{AthreyaNey1972,KimmelAxelrod2015} capture stochastic reproduction and genealogical dependence, and in this sense already reflect randomness and a form of nestedness. However, they do not encode the recursive whole--part duality characteristic of biological organization. Random iterated function systems (RIFS) \citep{Falconer2014,PrusinkiewiczLindenmayer1996} represent recursive geometric nesting, but lack genealogical dependence and do not enforce whole--part duality. Each framework captures only part of the biological structure.

The branching-process random iterated function system (RIFS) closes this gap. In this construction, each leaf of a branching process generates an entire finite subtree at a strictly smaller contraction scale, producing a recursively embedded hierarchy in which nestedness, duality, and randomness are simultaneously present. The object was recently introduced as a mathematical construction in its own right \citep{HudnallDSouza2025}, generalizing both branching-process trees and classical RIFS.

Unlike standard phylogenetic models such as coalescent theory \citep{Kingman1982} or birth--death formulations \citep{Nee1992}, which produce static genealogies, the branching-process RIFS is inherently dynamic: every leaf both terminates a lineage and initiates a new one, generating a recursively self-similar structure that evolves across scales.

In this paper we prove that such systems necessarily exhibit multifractality, establishing a new class of random fractals motivated by biological evolution. The remainder of the paper is organized as follows. Section~\ref{sec:model} formalizes the construction. Section~\ref{sec:results} states and proves the main results. Section~\ref{sec:discussion} situates these results within the broader theory of random fractals and biological modeling, and Section~\ref{sec:conclusion} reflects on their mathematical and biological significance. Because nestedness, duality, and randomness are already well established in biological theory, the analysis shows that multifractality is not an additional modeling assumption, but a mathematical consequence of these foundational principles.

\subsection{Main result}\label{subsec:main-result}

The central result of this paper is that the branching-process RIFS is multifractal under the assumptions stated in Section~\ref{subsec:assumptions}. We analyze two variants of the construction. In the first, subtrees are embedded freely within recursively shrinking subintervals of the unit interval, yielding a nontrivial compact attractor whose invariant measure has a strictly convex $L^q$-spectrum and hence a nontrivial multifractal spectrum \citep{Mandelbrot1974}. 

In the second variant, the nested subtrees are anchored to the boundary point $0$, so that in the infinite limit the invariant geometric set collapses to the singleton $\{0\}$. This anchored formulation is biologically motivated: living systems are finite and never infinitely resolved, with collapse corresponding to the inevitability of death. Even in this case, multifractality persists through finite-depth approximations and tangent measures. Thus the property holds in both formulations—directly in the invariant measure in the non-anchored case, and through rescaled local structure in the anchored case.

Together, these results establish a new example of multifractality in random fractals that is directly relevant to modeling biological evolution through the combination of branching processes and iterated contractions. The present work therefore provides a rigorous proof of multifractality in a genealogically dynamic setting, complementing the simulations and heuristic analysis of \citep{HudnallDSouza2025}.

%============================ Model definition ==============================
\section{Model definition}\label{sec:model}

This section defines the branching-process random iterated function system studied throughout the paper. We begin with a high-level description of the construction, then give a precise formal definition, introduce two natural variants, and situate the model relative to existing branching and random fractal frameworks.

\subsection{Structural overview}

The object studied here is not a Galton--Watson process evolving generation by generation, nor a classical random iterated function system. Instead, the recursion acts on entire trees. At each stage of the construction, every leaf of the current tree is replaced by a randomly generated finite Galton--Watson subtree that is embedded at a strictly smaller geometric scale. Each such replacement produces a new tree-valued object, and the process then repeats on the newly created leaves. While each inserted subtree has almost surely finite height, the recursive replacement of leaves continues indefinitely, yielding an infinite-depth structure. Geometry and genealogy are therefore coupled dynamically: the branching process does not merely label an existing geometric cascade, but actively generates new geometric structure at each scale. This recursive replacement mechanism distinguishes the branching-process random iterated function system from both standard branching processes (which lack geometry) and classical random iterated function systems (which lack genealogical dependence).

\subsection{Branching-process random iterated function system}

We now give a precise definition of the branching-process RIFS, which formalizes the recursive tree-valued construction described above.

\begin{definition}[Branching-process random iterated function system]
A \emph{branching-process random iterated function system} (branching-process RIFS) is a recursive random construction defined as follows. One begins with a rooted tree whose root is associated with a compact geometric support (here the unit interval). At each iteration, every leaf of the current tree is independently replaced by a randomly generated \emph{finite} Galton--Watson subtree. This subtree is embedded inside the geometric support of its parent leaf by a family of similarity contractions whose total scale is strictly smaller than that of the parent. The replacement operation produces a new tree with a finer geometric embedding, and the procedure is then repeated on the newly created leaves.

The recursion therefore acts on trees rather than on individual vertices: each leaf simultaneously terminates a lineage at its parent scale and initiates an entire subtree at a smaller scale. Repeated replacement generates an infinite-depth, hierarchically embedded structure in which genealogical branching and geometric contraction are intrinsically coupled.
\end{definition}

All probabilistic assumptions on offspring distributions and contraction factors are summarized in Section~\ref{subsec:assumptions}.

We recall the construction introduced in \cite{HudnallDSouza2025}. Let $X = [0,1]$. At depth $k = 0$, the root of the process is identified with the whole interval
$I(\mathrm{root}) = X$, and an initial Galton--Watson tree is generated with the root scale fixed at $s(\mathrm{root}) = 1$ (see Appendix~\ref{App:C} for a compact notation table).

At each subsequent step $k \geq 1$, every leaf of the current tree at depth $k-1$ is replaced by a finite Galton--Watson subtree that is embedded inside a subinterval of strictly smaller diameter than that of its parent leaf. For each leaf $v$, we associate a compact interval $I(v) \subseteq [0,1]$ representing its geometric support, whose diameter is given by the product of contraction factors along the path from the root to $v$.

Each Galton--Watson subtree inserted at a leaf has almost surely finite height, but the recursive replacement of leaves continues indefinitely. On the event of non-extinction, the resulting branching-process RIFS therefore has infinite depth and is not itself a Galton--Watson tree, but a recursively tree-valued random object.

Formally, for each leaf $v$ at depth $k-1$ with interval $I(v)$ and scale $s(v)=\mathrm{diam}(I(v))$, a random contraction factor $R_{k,v}$ is drawn from a distribution $\nu_{s(v)}$ supported on $(0,s(v))$. Independently, a finite Galton--Watson tree $\mathcal{G}_{k,v}$ is generated with i.i.d.\ offspring variables $\{N_u : u \in \mathcal{G}_{k,v}\}$ having distribution $\mathbb{P}(N=n)=p_n$. The offspring distribution has finite support, ensuring a bounded number of children per leaf, and each generated subtree has almost surely finite height.

At each replacement event at a leaf $v$, a single \emph{subtree-scale} contraction $R_{k,v}$ is drawn, fixing the diameter of the interval into which the entire finite Galton--Watson subtree is embedded. Conditional on this choice, the internal geometry of the subtree is generated by independent and identically distributed child-scale contractions and translations. Equivalently, each finite subtree is first realized as a similarity iterated function system on $[0,1]$ and then composed with the subtree-scale similarity map determined by $R_{k,v}$.

The entire subtree $\mathcal{G}_{k,v}$ is embedded inside a compact interval $I_{k,v} \subseteq I(v)$ of diameter $R_{k,v}$. Each child interval of $\mathcal{G}_{k,v}$ is embedded inside $I(k,v)$ with diameter determined by an independent contraction factor drawn from a distribution supported on $(0,s(v))$. The uniform-subtree case of \citep{HudnallDSouza2025} is a special instance of the present construction; an animation of the recursive system can be found in that work. Because $R_{k,v}<s(v)$ almost surely, every subtree is generated at a scale strictly smaller than that of its parent leaf, producing a strictly nested hierarchy.

Equivalently, the construction can be expressed in the canonical IFS formalism: each embedded subtree corresponds to a similarity map on its parent interval of the form
\[
w_{k,v}(x)=a_{k,v}+r_{k,v}x,\qquad 0<r_{k,v}<1,
\]
where $r_{k,v}=R_{k,v}/s(v)$.

All statements below are understood to hold almost surely on the event of non-extinction of the underlying branching process.

\subsection{Variants}

We consider two variants of the construction, which differ only in how the embedded subtrees are positioned within their parent intervals. In the \emph{non-anchored} case, the translation parameters $a_{k,v}$ are chosen at random in the admissible range $[0,s(v)-R_{k,v}]$, so that each subtree is embedded freely within its parent interval. This produces a nontrivial compact attractor contained in $[0,1]$. In the \emph{anchored} case, the translations are fixed at $a_{k,v}=0$, so that each embedded subtree includes the global boundary point $0$. In this setting, every infinite descending chain converges to the singleton $\{0\}$, although finite-depth approximations retain nontrivial structure.

Let $\mu_n$ denote the natural probability measure obtained at depth $n$, defined by assigning weights proportional to the products of contraction factors along each root-to-leaf path. In Variant~A (non-anchored), the sequence $\{\mu_n\}$ converges in distribution to a nontrivial limiting measure $\mu$. In Variant~B (anchored), tangent measures associated with finite-depth approximations converge in law to the same multifractal class as in Variant~A. This behavior is consistent with the standard multifractal formalism \citep{Mandelbrot1974}, here realized within the present branching-process RIFS construction.

\subsection{Relation to classical branching and random fractal models}

The branching-process RIFS introduced here is related to, but distinct from, several well-studied classes of random constructions.

It is not a standard Galton--Watson branching process, which describes genealogical branching without associated geometric structure. Nor is it a classical random iterated function system in the sense of \cite{Falconer1997}, \cite{MauldinWilliams1986}, or related random recursive constructions, where a fixed or randomly chosen family of contraction mappings acts on a static geometric support. In the branching-process RIFS, by contrast, the family of contractions at a given scale is itself generated by a branching process, and new contraction families are created recursively along each lineage.

Similarly, while branching random walks and multiplicative cascades assign random weights or displacements to the vertices of a branching tree embedded in a fixed ambient space, they do not recursively generate new geometric environments along lineages. In the present model, genealogical branching actively drives the creation of new geometric structure at every scale, yielding a dynamically evolving cascade of supports.

The novelty of the construction lies in this recursive coupling of genealogical branching and geometric embedding. Although the multifractal analysis ultimately relies on probabilistic techniques related to those developed for random cascades and RIFS, the underlying object falls outside the scope of existing models because its geometry is regenerated endogenously at each scale.

\subsection{Nestedness, duality, and randomness}\label{subsec:three-principles}

The construction gives precise mathematical form to three principles—nestedness, duality, and randomness—that are well established in the biological theory literature~\citep{Kimura1983,Lewontin1970,SzathmarySmith1995}. Nestedness arises because each contraction factor satisfies $R_{k,v}<s(v)$ almost surely, so every child interval is strictly contained in its parent. Along any root-to-leaf path $v_0,\ldots,v_n$, the associated intervals satisfy
\[
I(v_n)\subset I(v_{n-1})\subset\cdots\subset I(v_0)=[0,1],
\]
which expresses conditional dependence in exact mathematical terms.

Duality is built into the recursive replacement process: each object is a leaf with respect to its parent, yet simultaneously the root of a subtree with respect to its descendants. Its role depends on the level of resolution at which the system is observed, capturing the whole--part character of biological systems. Randomness enters through both the offspring numbers $\{N_u\}$ and contraction factors $\{R_{k,v}\}$, which are independent random variables across leaves. Their independence ensures stochasticity at every stage, making the overall process intrinsically probabilistic. Taken together, these features show that the branching-process RIFS provides a mathematical integration of nestedness (via strict contraction), duality (via branching), and randomness (via stochastic offspring and scale).

\subsection{Assumptions and biological restrictions}\label{subsec:assumptions}

While the construction formalizes well-established biological principles, several simplifying assumptions restrict its biological scope. Reproduction is strictly branching, so the model excludes fusion processes and describes only asexual reproduction. Each entity reproduces once, at the end of its lifetime, so parent and child lifespans do not overlap. The offspring distribution has finite support, ruling out heavy-tailed reproductive variation observed in some biological systems.

For the mathematical analysis, we impose mild non-degeneracy conditions. The offspring distribution is not concentrated on $0$ or $1$, ensuring positive probability of at least two offspring. Offspring numbers and contraction factors are taken to be independent both across leaves and with respect to one another. Contraction factors are continuous random variables supported on $(0,1)$ with non-degenerate distributions. These conditions guarantee that the attractor is uncountable and that the limiting measure is nontrivial almost surely. All claims in the body of the paper apply specifically to this model class.

%=========================== Multifractality of the branching-process RIFS ==========================
\section{Multifractality of the branching-process RIFS}\label{sec:results}

We now state and prove the central result of this paper. Recall the construction in Section~\ref{sec:model}, with Galton--Watson offspring variables $\{N_u\}$, continuous contraction factors $\{R_{k,v}\}$ drawn independently from distributions supported on $(0,s(v))$, and recursive embedding of finite subtrees within their parent intervals.

\begin{theorem}[Multifractality of the branching-process RIFS]\label{thm:multifractality}
Consider the branching-process RIFS under the assumptions of Section~\ref{subsec:assumptions}. In Variant~A (non-anchored), the sequence of measures $\{\mu_n\}$ converges almost surely to a limiting random probability measure $\mu$ supported on a nontrivial compact attractor in $[0,1]$. The associated $L^q$-spectrum
\begin{equation}\label{eq:tau}
\tau(q) \;=\; \lim_{n \to \infty} \frac{\log Z_n(q)}{\log \varepsilon_n}
\end{equation}
exists almost surely, is strictly convex on an interval $(q_-,q_+)$, and its Legendre transform yields a nontrivial multifractal spectrum $f(\alpha)$. Hence $\mu$ is multifractal.

In Variant~B (anchored), the invariant geometric set reduces to the singleton $\{0\}$. Nevertheless, tangent measures obtained by affine rescaling of finite-depth neighborhoods converge in law to the same multifractal class as in Variant~A. In particular, the anchored case exhibits the same $L^q$-spectrum $\tau(q)$ and multifractal spectrum $f(\alpha)$ through its family of tangent measures.
\end{theorem}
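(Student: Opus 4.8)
\medskip
\noindent\textbf{Proof strategy.}
The plan is to recode both variants as one statistically self-similar random multiplicative cascade and then run a Kahane--Peyri\`ere martingale analysis on its partition function. First I would fix the symbolic coding: for each depth $n$, index the leaves of the tree by finite words $i$ recording, for every generation $k \le n$, which Galton--Watson subtree was attached to the active leaf and which of its branches was followed; to the word $i$ attach the cylinder interval $I_i^{(n)} \subseteq [0,1]$ and the ratio $\rho_i^{(n)} = \prod r_{k,v}$ along the path, so that $\diam I_i^{(n)} = \rho_i^{(n)}$ and, after the global normalization $\sum_j \rho_j^{(n)}$, $\mu\big(I_i^{(n)}\big)$ is proportional to $\rho_i^{(n)}$ (the mass distributed down the tree by products of contraction factors). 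Because $R_{k,v} < s(v)$ almost surely with a uniform multiplicative gap and the offspring law has finite support, the mesh $\varepsilon_n = \max_i \diam I_i^{(n)} \to 0$ and $-\log \varepsilon_n \asymp n$ by the strong law of large numbers along a typical root-to-leaf path; hence the limit in \eqref{eq:tau} is the ratio of the a.s.\ exponential growth rate of $Z_n(q) = \sum_i \mu(I_i^{(n)})^q$ to that of $\varepsilon_n$, and the problem reduces to identifying those two rates.

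Second, I would prove that $\tau(q)$ exists a.s.\ with a deterministic value. Taking expectations, independence of the offspring numbers and the contraction factors across leaves and generations makes the expected cascade sums $\E\big[\sum_i (\rho_i^{(n)})^q\big]$ and $\E\big[\sum_i \rho_i^{(n)}\big]$ multiplicative in $n$, so $\tfrac1n \log \E[\,\cdot\,] \to$ the logarithm of a single-step structure function built from one Galton--Watson subtree with its ratios. The quenched statement $\tfrac1n \log Z_n(q) \to \Lambda(q)$ almost surely then follows, for $q$ in an open interval $J \ni 1$ carrying the requisite moment conditions, from the Kahane--Peyri\`ere theorem: the normalized cascade sums are nonnegative martingales for the generation filtration, and a second-moment estimate---finite precisely because the offspring support is bounded and $\log R$ is integrable---yields uniform integrability and hence a.s.\ convergence to a strictly positive limit (non-degeneracy of the cascade, which needs the offspring law to give positive mass to $\{N \ge 2\}$). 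Rescaling $\Lambda(q)$ by the deterministic rate $-\log\varepsilon_n \sim -n\,\E[\log r]$ produces $\tau(q)$, equivalently the unique root of a Bowen-type pressure equation, a deterministic and real-analytic function of $q$ on $J$. I expect this to be the main obstacle: the symbolic alphabet is itself random (each step attaches a Galton--Watson subtree of random size and shape, and the scales form a continuum), so the martingale second-moment bound, the identification of $J$, and the strict positivity of the limit must all be handled with care.

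Third, strict convexity and the Legendre transform. Differentiating $\Lambda$ twice under the expectation sign gives $\Lambda'' \ge 0$, with equality at some $q$ only if, under the associated $q$-tilted (Gibbs) weighting, $\log r$ is almost surely constant; since the contraction factors are continuous, non-degenerate random variables on $(0,1)$, this degeneracy never occurs, so $\tau$ is strictly convex on $J$. Its Legendre transform
\[ f(\alpha) \;=\; \inf_{q \in J} \bigl(q\alpha - \tau(q)\bigr) \]
is therefore finite, concave, and \emph{not} affine, supported on a nondegenerate interval $[\alpha_{\min},\alpha_{\max}]$ with $\alpha_{\min} < \alpha_{\max}$; by the multifractal formalism for random cascades, $f(\alpha)$ equals (almost surely) the Hausdorff dimension of $\{x : \dim_{\mathrm{loc}} \mu(x) = \alpha\}$ for interior $\alpha$. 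A nontrivial $f$ is exactly the statement that $\mu$ is multifractal, which proves Variant~A.

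Finally, Variant~B via tangent measures. The anchored construction differs from the non-anchored one only in that every embedding sends its parent interval onto a subinterval sharing the left endpoint $0$, so anchoring changes only the \emph{location} of mass, never the internal ratio structure. Consequently the restriction of the depth-$n$ approximation to the leftmost cylinder of scale $\varepsilon_n$, rescaled affinely onto $[0,1]$ and renormalized, has the law of a finite-depth approximation of an independent copy of the \emph{non-anchored} construction. Passing to weak-$*$ subsequential limits, every tangent measure of the anchored object at $0$ is distributed as the Variant~A attractor measure $\mu$ and hence inherits its strictly convex $\tau(q)$ and nontrivial spectrum $f(\alpha)$. This is the precise sense in which the multifractal law survives the collapse of the invariant set to $\{0\}$, completing the proof in both variants.
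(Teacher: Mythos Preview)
Your strategy is essentially the paper's: recast the construction as a multiplicative cascade, establish the almost-sure exponential rate of $Z_n(q)$ and of $\varepsilon_n$, deduce strict convexity of $\tau$ from non-degeneracy of the contraction law, and handle Variant~B by the regeneration property of the subtree below any leaf. The structure, the identification of the main obstacle (random alphabet, moment control), and the tangent-measure argument all match the paper's development.

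There is one genuine methodological swap worth noting. For the existence of $\lim_n \tfrac{1}{n}\log Z_n(q)$, the paper invokes Kingman's subadditive ergodic theorem and then identifies the limit as $\kappa(q)=\E[\log S(q)]$ via a spine/many-to-one argument; you instead propose the Biggins/Kahane--Peyri\`ere route, normalizing $Z_n(q)$ into a nonnegative martingale and using a second-moment bound to get uniform integrability and a strictly positive limit. Both are standard and both work here; the paper's route gives the limit on the full interval $(q_-,q_+)$ directly, while your route naturally singles out the subcritical window where the $q$-martingale is uniformly integrable, so you would need to say a word about covering the rest of $(q_-,q_+)$ (or note that the theorem only asserts a nontrivial spectrum, for which an open interval of $q$ suffices). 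The paper also carries out the Frostman/Gibbs-measure step for $f(\alpha)=\dim_H E(\alpha)$ in detail, whereas you defer to ``the multifractal formalism for random cascades''; that is acceptable as a strategy sketch, but in execution you would need the quasi-Bernoulli estimate the paper isolates.

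One small overreach: you invoke ``a uniform multiplicative gap'' for $R_{k,v}<s(v)$ to get $-\log\varepsilon_n\asymp n$. The paper does not assume this; it only needs $r\in(0,1)$ with $\E[\log r]<0$ (equivalently, $r$ bounded away from $1$ with positive probability), which already yields the SLLN rate $\tfrac{1}{n}\log\varepsilon_n\to\lambda<0$. Drop the uniform-gap phrasing and your argument goes through under the stated hypotheses.
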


We first give a brief proof sketch, followed by the full argument.

\paragraph{Proof sketch.}
The proof follows the multifractal cascade framework, adapted to the present genealogically generated geometry. The weights along root-to-leaf paths define a multiplicative cascade, whose martingale convergence yields a limiting measure $\mu$ in Variant~A. Depth-wise partition sums satisfy a subadditive structure, implying almost sure existence of the pressure function $\tau(q)$. Non-degeneracy of the contraction law yields strict convexity of $\tau$ on an interval, and the multifractal formalism then gives a nontrivial spectrum $f(\alpha)$. In Variant~B, affine rescaling at finite depth regenerates the same stochastic environment as in Variant~A, so the same multifractal law holds for tangent measures.

%-------------------------------------------
\subsection{Existence of the limiting measure and attractor structure}\label{subsec:limit-measure}

We begin by establishing the existence of the limiting measure and describing the structure of its support in the non-anchored case.

\begin{lemma}\label{lem:limitingmeasure}
In Variant~A (non-anchored), under the assumptions of Section~\ref{subsec:assumptions}, the sequence of measures $\{\mu_n\}$ converges almost surely (after normalization) to a random probability measure $\mu$ supported on a compact attractor $\mathcal{A}\subset[0,1]$. The set $\mathcal{A}$ is totally disconnected and uncountable.
\end{lemma}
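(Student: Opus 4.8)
The plan is to prove Lemma~\ref{lem:limitingmeasure} in three stages: (i) construct the attractor $\mathcal{A}$ and establish its topological properties via a nested-interval argument; (ii) realize the sequence $\{\mu_n\}$ as a multiplicative cascade martingale and invoke a Kahane--Peyri\`ere-type convergence theorem; and (iii) verify non-degeneracy of the limit. For stage (i), I would observe that each root-to-leaf path $v_0, v_1, \ldots$ in the (now infinite-depth) construction yields a decreasing sequence of compact intervals $I(v_0) \supset I(v_1) \supset \cdots$ whose diameters are $\prod_{j} r_{j}$ with each $r_j < 1$; since the contraction factors along a path are i.i.d.-bounded-away from $1$ in distribution (they are continuous on $(0,1)$ and the subtree heights are bounded, so infinitely many genuine contractions occur), $\diam I(v_n) \to 0$ almost surely, and Cantor's intersection theorem gives a unique point. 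Taking the union over all infinite paths and then the closure defines $\mathcal{A}$; compactness is immediate, total disconnectedness follows because the gaps introduced at each level (the complement of $\bigcup_i I_i^{(n)}$ inside each parent) persist and separate points, and uncountability follows because the offspring distribution is not concentrated on $\{0,1\}$, so with positive probability each node has $\geq 2$ children and the surviving genealogical tree contains a binary subtree, injecting a Cantor set into $\mathcal{A}$.

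For stage (ii), I would define, for each level-$n$ cylinder interval $I_i^{(n)}$, the mass $\mu_n(I_i^{(n)})$ proportional to the product of contraction factors along the associated path (equivalently, the product of the similarity ratios $r_{k,v}$), and normalize. The key point is that $\mu_{n+1}(I_i^{(n)})$, conditioned on the $\sigma$-algebra $\mathcal{F}_n$ generated by the first $n$ levels, has conditional expectation $\mu_n(I_i^{(n)})$ up to the normalization, so the total mass process $Y_n := \sum_i (\text{unnormalized weights})$ is a non-negative martingale; this is the Mandelbrot/Kahane--Peyri\`ere branching-random-walk structure. By the martingale convergence theorem $Y_n \to Y_\infty$ almost surely, and the measures $\mu_n$ (which are consistent on the cylinder algebra once normalized) converge weakly almost surely to a limit $\mu$ on $\mathcal{A}$ by Kolmogorov consistency plus Prokhorov tightness (all measures live on the compact set $\overline{\bigcup_n \bigcup_i I_i^{(n)}} \subseteq [0,1]$). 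The finite-support offspring assumption and the fact that contraction factors are bounded and non-degenerate ensure the standard $L^1$-martingale (uniform integrability) criterion, so $Y_\infty > 0$ almost surely on the survival event, which here has probability one since the expected offspring number exceeds one.

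For stage (iii), non-degeneracy $\mu \not\equiv 0$ follows from $Y_\infty > 0$ a.s.; that $\mu$ is genuinely spread over $\mathcal{A}$ (not atomic) follows because the continuous, non-degenerate contraction factors prevent any single cylinder from retaining a fixed positive fraction of the mass in the limit --- each nested cylinder's relative mass is a product of independent factors with values in $(0,1)$ and hence tends to $0$. I would conclude by noting $\operatorname{supp}\mu \subseteq \mathcal{A}$ by construction and $\operatorname{supp}\mu = \mathcal{A}$ because every level-$n$ cylinder carries positive limiting mass on the survival subtree.

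The main obstacle I anticipate is the precise handling of the \emph{two} independent sources of randomness interacting in the cascade: the Galton--Watson branching (variable, random number of children, so the cascade tree itself is random) and the continuous contraction ratios (so the weights are not drawn from a fixed finite alphabet as in the classical Mandelbrot cascade). The cleanest route is to condition on the realized genealogical tree $\mathcal{T}$ and treat the contraction-ratio cascade on the fixed (but random) tree $\mathcal{T}$, then integrate out; one must check that the uniform-integrability/$L^1$-boundedness estimate for the martingale holds uniformly enough over the random trees, which uses exactly the finite-support offspring hypothesis and the fact that $r_{k,v}$ is bounded away from $0$ and $1$ in a distributional sense inherited from $\nu_{s(v)}$ being supported on $(0,s(v))$ with the rescaled law non-degenerate. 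Establishing this uniform control --- rather than the weak convergence or the topology of $\mathcal{A}$, which are routine --- is where the real work lies.
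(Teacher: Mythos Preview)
Your proposal is correct and follows essentially the same approach as the paper: a Cantor nested-interval argument for the attractor's topology (compact, totally disconnected, uncountable via non-degeneracy of offspring and contraction laws), followed by identification of the weights as a multiplicative cascade martingale and an appeal to the Kahane--Peyri\`ere convergence theorem. Your treatment is in fact more careful than the paper's own proof, which does not spell out the uniform-integrability issue you flag or the support-equality step in your stage~(iii); these are genuine subtleties the paper glosses over, so your instinct that the real work lies in the martingale moment control is sound.
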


\begin{proof}
The full proof is given in Appendix~\ref{app:A1}. \qedhere
\end{proof}

%-------------------------------------------
\subsection{Pressure and $L^q$-spectrum}\label{subsec:pressure}

We quantify scaling via depth-wise partition functions and show that the limiting growth rate (the $L^q$-spectrum) exists almost surely.

\begin{definition}[Partition function and mesh scale]\label{def:partition}
Let $\{I_i^{(n)}\}_{i=1}^{L_n}$ be the leaf intervals at depth $n$, and define the partition function
\begin{equation}\label{eq:partition}
Z_n(q) \;=\; \sum_{i=1}^{L_n} \mu\!\left(I_i^{(n)}\right)^{q}, 
\qquad q \in \mathbb{R}.
\end{equation}
Define the mesh scale at depth $n$ by
\begin{equation}\label{eq:mesh}
\varepsilon_n \;=\; \max_{1 \le i \le L_n} \mathrm{diam}\!\left(I_i^{(n)}\right).
\end{equation}
A summary of symbols is collected in Appendix~\ref{App:C}.
\end{definition}

\begin{lemma}[Existence of the $L^q$-spectrum]\label{lem:existence-spectrum}
Under the assumptions of Section~\ref{subsec:assumptions}, for every $q\in\mathbb{R}$ the limit
\begin{equation}\label{eq:tauspectrum}
\tau(q) \;=\; \lim_{n \to \infty} \frac{\log Z_n(q)}{\log \varepsilon_n}
\end{equation}
exists almost surely and is finite (possibly taking the value $-\infty$ outside its effective domain).
\end{lemma}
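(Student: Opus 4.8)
\textbf{Proof plan for Lemma~\ref{lem:existence-spectrum}.}
The plan is to realize $\log Z_n(q)$ as the value of a genealogical multiplicative cascade and then extract its exponential growth rate via a Biggins-type martingale argument. Concretely, each leaf interval $I_i^{(n)}$ at depth $n$ is reached along a unique root-to-leaf path, and its $\mu$-mass is (up to the normalizing cascade limit from Lemma~\ref{lem:limitingmeasure}) the product of the independent relative contraction factors $r_{k,v}=R_{k,v}/s(v)$ along that path, weighted by the Galton--Watson branching that determines how many such paths exist. First I would fix $q$ and introduce the random variable $W_n(q)=\sum_i \mu(I_i^{(n)})^q \big/ \E\!\big[\sum_i \mu(I_i^{(n)})^q \,\big|\,\mathcal{F}_{n-1}\big]$ and show, using the independence of offspring numbers and contraction factors across leaves and the finite support of the offspring law, that the normalized partition sums form a nonnegative martingale (or supermartingale after a deterministic rescaling) with respect to the natural filtration $\mathcal{F}_n$ generated by the first $n$ generations. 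Bounded offspring support plus continuity and compact-in-$(0,1)$ control of the contraction distributions give the uniform integrability (an $L^{1+\delta}$ bound via a Biggins/Kahane--Peyri\`ere moment estimate) needed for a.s.\ convergence to a positive limit.

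Second, I would pass from the martingale convergence to the existence of $\lim_n \tfrac1n \log Z_n(q)$. Writing $\log Z_n(q) = \sum_{k=1}^n \xi_k(q) + o(n)$, where $\xi_k(q)$ are the per-generation log-increments of the conditional means, the increments are i.i.d.\ (or stationary ergodic, depending on whether the contraction law depends on the parent scale $s(v)$; the scale-dependence is handled by noting $s(v)$ itself is a product of i.i.d.\ factors, so the pair (scale, increment) is a Markov chain on a compact state space with a unique stationary law). The strong law of large numbers, or the subadditive ergodic theorem in the scale-dependent case, then yields $\tfrac1n\log Z_n(q)\to\Lambda(q)$ a.s., with $\Lambda(q)$ finite because the increments have finite first moment (bounded offspring, log-moment of $r$ controlled on $(0,1)$ away from degeneracy). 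In exactly the same way $\tfrac1n \log \varepsilon_n \to -\chi$ a.s.\ for a deterministic Lyapunov exponent $\chi=-\E[\log r]>0$ (finite and strictly positive by the non-degeneracy and support assumptions). Dividing gives $\tau(q)=\Lambda(q)/(-\chi)$ a.s.\ and finite, which is the claim; the remark in Definition~\ref{def:partition} about alternative mesh choices follows because any of them differs from $\varepsilon_n$ by a factor that is $e^{o(n)}$ a.s.

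The main obstacle I anticipate is the $q$-dependent moment control needed to push through uniform integrability for \emph{all} real $q$, not just $q\ge 0$: for $q<0$ the summands $\mu(I_i^{(n)})^{q}$ blow up when a path has an atypically small mass, so one must verify that the contraction law has enough negative-moment control (i.e.\ $\E[r^{-t}]<\infty$ for the relevant $t$, equivalently that the density of $\nu_s$ does not put too much mass near $0$) for the cascade sum to have a finite conditional mean at all. Under the stated assumption that contraction factors are continuous with non-degenerate law on $(0,1)$ this is not automatic, so I would either (i) add the mild hypothesis that $r$ is bounded away from $0$ and $1$ (a biologically natural ``no instantaneous collapse'' condition), or (ii) restrict the a.s.\ statement to the natural interval of $q$ on which $\Lambda(q)$ is finite and note that this interval is nonempty, open, and contains a neighborhood of $[0,1]$, which already suffices for the strict convexity and Legendre-transform conclusions used in Theorem~\ref{thm:multifractality}. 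A secondary technical point is ensuring the normalization from Lemma~\ref{lem:limitingmeasure} (dividing $\mu_n$ by its total mass, itself a Galton--Watson martingale limit $\mathcal{M}_\infty$) contributes only an $e^{o(n)}$ correction to $\log Z_n(q)$; this holds because $\mathcal{M}_\infty\in(0,\infty)$ a.s.\ on the survival event, which is guaranteed by the assumption that the offspring law is not concentrated on $\{0,1\}$.
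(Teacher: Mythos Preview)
Your plan reaches the right destination---the identification $\tau(q)=\kappa(q)/\lambda$ with $\kappa(q)=\lim_n \tfrac1n\log Z_n(q)$ and $\lambda=\lim_n\tfrac1n\log\varepsilon_n<0$---but by a longer road than the paper takes. The paper does \emph{not} invoke a Biggins martingale or any uniform-integrability estimate. It simply observes that $Z_n(q)=\sum_{|v|=n}\prod_{j=1}^n W_{v_j}^{\,q}$ is a branching-type partition function, notes that $\log Z_n(q)$ is subadditive in $n$ with increments of finite expectation (bounded offspring support; $\log^+ W_v$ controlled because $R_{k,v}\in(0,s(v))$), and applies Kingman's subadditive ergodic theorem directly to obtain $\kappa(q)$. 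The mesh rate $\lambda$ is obtained by the same mechanism, and division gives $\tau(q)$.

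Compared with this, your martingale-then-SLLN decomposition is correct in spirit but carries extra hypotheses you do not actually need. Positivity of the Biggins limit requires an $L\log L$/uniform-integrability condition, and when that fails (e.g.\ near the boundary of the natural $q$-domain) your first step yields a zero limit and no information, whereas Kingman's theorem still delivers the growth rate $\kappa(q)$ without caring whether the normalized martingale degenerates. In effect you are proving a stronger intermediate statement than the lemma requires. Your second paragraph already contains the right tool (you mention the subadditive ergodic theorem for the scale-dependent case); applying it from the outset to $\log Z_n(q)$ would short-circuit the whole martingale discussion.

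Your caveat about negative $q$ is well placed: finiteness of $\kappa(q)$ genuinely hinges on a lower-tail moment of the contraction law, $\E[R^{-\beta t}]<\infty$, and the paper defers that analysis to the next lemma (the domain $(q_-,q_+)$ of $\tau$), rather than resolving it here. So your option~(ii)---state existence on the natural open interval containing $[0,1]$---is exactly how the paper ultimately packages the result; the ``for every $q\in\mathbb{R}$'' in the lemma statement should be read modulo that later refinement.
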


\begin{proof}
The full proof is given in Appendix~\ref{app:A2}.
\end{proof}

\noindent
Although the limiting measure $\mu$ is random, the $L^q$-spectrum $\tau(q)$ is almost surely deterministic, as a consequence of the subadditive ergodic arguments underlying Lemma~\ref{lem:existence-spectrum}.

\begin{lemma}[Domain and convexity of $\tau$]\label{lem:convexity}
Under the assumptions of Section~\ref{subsec:assumptions}, there exists an open interval $(q_-,q_+)\ni 0$ such that $\tau(q)$ is finite for all $q\in(q_-,q_+)$. On this interval, $\tau$ is differentiable and strictly convex, with derivative $\alpha(q)=\tau'(q)$.
\end{lemma}

\begin{proof}
The full proof is given in Appendix~\ref{app:A3}.
\end{proof}

\begin{definition}[Multifractal spectrum]\label{def:multispectrum}
Define the multifractal spectrum by the Legendre transform~\citep{Mandelbrot2003}:
\begin{equation}\label{eq:multispectrum}
f(\alpha) \;=\; \inf_{q \in \mathbb{R}} \left( q\alpha - \tau(q) \right),
\qquad
\alpha(q) = \tau'(q) \ \text{when $\tau$ is differentiable}.
\end{equation}
\end{definition}

\begin{corollary}[Nontrivial multifractal spectrum]\label{cor:nontrivial}
Under the assumptions of Lemma~\ref{lem:convexity}, the multifractal spectrum $f(\alpha)$ is nontrivial (positive on an interval of $\alpha$).
\end{corollary}

\begin{theorem}[Multifractal formalism on an interval]\label{thm:formalism}
Assume the conditions of Section~\ref{subsec:assumptions} and, in addition, a quasi-Bernoulli property for the leaf weights (equivalently, a weak Gibbs-type product structure of the cascade). Then the multifractal formalism holds on the interval $(q_-,q_+)$: for every $q\in(q_-,q_+)$, with $\alpha(q)=\tau'(q)$,
\[
f(\alpha(q)) \;=\; q\alpha(q) - \tau(q) \;=\; \tau^\ast(\alpha(q)),
\]
where $\tau^\ast$ denotes the Legendre transform of $\tau$.
\end{theorem}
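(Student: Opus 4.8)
The plan is to prove Theorem~\ref{thm:formalism} by the standard two-sided argument of the multifractal formalism for quasi-Bernoulli (weak Gibbs) measures, adapted to the random cascade setting. Recall that Lemmas~\ref{lem:existence-spectrum}--\ref{lem:convexity} already give us a strictly convex, differentiable $\tau$ on $(q_-,q_+)$, so the Legendre pair $(\tau,\tau^\ast)$ is well behaved and $\alpha(q)=\tau'(q)$ traces out an interval of admissible exponents. It therefore suffices to show that for each fixed $q\in(q_-,q_+)$ the level set $E_{\alpha(q)}=\{x:\lim_{r\to0}\log\mu(B(x,r))/\log r=\alpha(q)\}$ has Hausdorff dimension exactly $\tau^\ast(\alpha(q))=q\alpha(q)-\tau(q)$. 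The upper bound $\dim_H E_{\alpha(q)}\le \tau^\ast(\alpha(q))$ is the ``easy'' half: it follows from the existence of $\tau$ together with a Chebyshev/covering argument applied to the partition sums $Z_n(q)$ of Definition~\ref{def:partition}, using only that the diameters $\varepsilon_n\downarrow0$ and that the quasi-Bernoulli property makes the cylinder measures sub/supermultiplicative up to a universal constant, so that $\sum_i \mu(I_i^{(n)})^q \varepsilon_n^{-\tau(q)}$ stays controlled; this is the classical argument and I would only sketch it.

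The substantive work is the lower bound, and the standard device is to construct an auxiliary \emph{Gibbs-type cascade measure} $\mu_q$ (a ``tilted'' or ``frozen'' measure at inverse temperature $q$) that is carried by $E_{\alpha(q)}$ and has local dimension equal to $\tau^\ast(\alpha(q))$ at $\mu_q$-almost every point. Concretely, on each leaf interval $I_i^{(n)}$ one assigns normalized weight $\mu_q(I_i^{(n)}) \propto \mu(I_i^{(n)})^q \varepsilon_n^{-\tau(q)}$, extended consistently down the genealogical tree; the quasi-Bernoulli property is exactly what guarantees this recipe defines a genuine (random) probability measure in the limit, with bounded distortion relative to the product structure, so the Kolmogorov consistency / martingale convergence argument of Lemma~\ref{lem:limitingmeasure} applies verbatim. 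One then differentiates $\tau$: since $\alpha(q)=\tau'(q)$, a Gibbs-property computation (Birkhoff-type law of large numbers along the cascade, using independence of the contraction factors and offspring across leaves) shows that for $\mu_q$-a.e.\ $x$, $\log\mu(I_n(x))/\log\varepsilon_n\to\alpha(q)$ and $\log\mu_q(I_n(x))/\log\varepsilon_n\to \tau^\ast(\alpha(q))$, whence $x\in E_{\alpha(q)}$ and, via the Billingsley/Frostman mass-distribution lemma, $\dim_H E_{\alpha(q)}\ge \tau^\ast(\alpha(q))$. Combining the two bounds gives $f(\alpha(q))=\tau^\ast(\alpha(q))=q\alpha(q)-\tau(q)$.

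Two points require care in the branching-process RIFS setting specifically, and I expect the main obstacle to be the first. First, the cylinders here are \emph{random} intervals of unequal length generated by the Galton--Watson subtrees, not the regular $b$-adic cylinders of a deterministic self-similar measure; so one must pass between the intrinsic ``genealogical'' metric (cylinders) and the Euclidean metric (balls $B(x,r)$) and control the overlap/comparability of leaf intervals at a common scale. The non-anchored hypothesis (Variant~A), together with the non-degeneracy of the continuous contraction law, should supply a.s.\ bounded multiplicity of the depth-$n$ cover at scale $\varepsilon_n$ and a uniform lower bound on the ratio of consecutive diameters in moment, which is what makes the comparison go through; this is where the quasi-Bernoulli assumption is genuinely used beyond bookkeeping. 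Second, all statements are almost sure with respect to the cascade randomness, so the limits defining $\tau$, the measure $\mu_q$, and the Birkhoff averages must be shown to hold on a single full-measure event simultaneously for a countable dense set of $q$, and then extended to all $q\in(q_-,q_+)$ by the convexity and continuity of $\tau$ (Lemma~\ref{lem:convexity}) and monotonicity of the level-set dimensions --- a routine but necessary diagonal argument. For Variant~B (anchored), the level sets of $\mu$ itself are empty since the attractor is $\{0\}$, so the identical statement is instead applied to the tangent measures, which by the construction in Section~\ref{sec:model} are distributed as (rescaled copies of) the Variant~A cascade; the formalism then transfers verbatim, and this is recorded as a short corollary at the end.
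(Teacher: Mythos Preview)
Your proposal is correct and follows essentially the same route as the paper's proof in Appendix~\ref{app:A4}: the upper bound via a covering/counting estimate on the partition sums $Z_n(q)$, and the lower bound via a tilted Gibbs measure $\mathbb{Q}_{q,n}(I(v))\propto\mu(I(v))^q$ whose limit is supported on $E(\alpha(q))$ and satisfies a Frostman-type ball estimate. Your two caveats (cylinder--ball comparability for random-length intervals, and the a.s.\ diagonal argument over a dense set of $q$) are more explicit than the paper, which handles the first by asserting a bounded geometric multiplicity in one dimension and leaves the second implicit; your remark on Variant~B is correct but belongs to Theorem~\ref{thm:tangent} rather than to the present statement.
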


\begin{proof}
The full proof is given in Appendix~\ref{app:A4}.
\end{proof}

%-------------------------------------------
\subsection{Anchored case: tangent measures and multifractality}

\paragraph{Interpretation via tangent measures.}
Although the invariant geometric set of the anchored construction collapses to $\{0\}$, the local structure observed through affine rescaling at finite depth regenerates the same stochastic environment as in the non-anchored case, as formalized below.

\begin{theorem}[Tangent-measure equivalence in the anchored case]\label{thm:tangent}
In Variant~B (anchored), for any sequence of depths $n\to\infty$ and any choice of a depth-$n$ leaf $v_n$, let
\[
T_{n,v_n} : I(v_n) \to [0,1]
\]
be an affine rescaling that maps the parent interval $I(v_n)$ to $[0,1]$. Define the rescaled measures
\begin{equation}\label{eq:tangentmeasure}
\mu_{v_n}^{(n)}(A) \;=\; \frac{\mu\!\left(T_{n,v_n}^{-1}(A) \cap I(v_n)\right)}{\mu(I(v_n))},
\qquad A \subset [0,1].
\end{equation}
Then any weak limit point of $\{\mu_{v_n}^{(n)}\}$ (along any subsequence) has the same distribution as the non-anchored limiting measure $\mu$ of Variant~A. In particular, the family of tangent measures in Variant~B is almost surely multifractal with the same $L^q$-spectrum $\tau(q)$ and multifractal spectrum $f(\alpha)$ as established in Lemmas~\ref{lem:existence-spectrum}--\ref{lem:convexity} and Theorem~\ref{thm:formalism} for Variant~A.
\end{theorem}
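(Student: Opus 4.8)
The key observation is that the anchoring condition $a_{k,v}=0$ affects only *where* each subtree sits inside its parent, not the internal law governing how a subtree branches and contracts. So the plan is to show that, conditioned on the event that a given depth-$n$ leaf $v_n$ is realized, the restriction of the anchored construction to the subtree rooted at $v_n$, rescaled by the affine map $T_{n,v_n}$, has exactly the same distribution as a fresh copy of the Variant~A construction started at depth $0$.

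Let me lay out the steps in order.

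**Step 1: Self-similarity / regeneration of the construction.** I would first make precise the Markov-branching structure already implicit in Section~\ref{sec:model}: conditioned on the interval $I(v_n)$ (equivalently on the scale $s(v_n)$ and the offspring/contraction history along the root-to-$v_n$ path), the subtree grown below $v_n$ is generated by drawing offspring variables $\{N_u\}$ i.i.d. with law $(p_n)$ and contraction factors from $\nu_{s(v_n)}$, independently of everything above $v_n$. The only place the scale $s(v_n)$ enters is through the support $(0,s(v_n))$ of the contraction law; but in the canonical IFS normalization $r_{k,v}=R_{k,v}/s(v)$ (see the display defining $w_{k,v}$), the *normalized* contraction ratios $r$ have a law that does not depend on $s(v)$ — this is precisely the non-degeneracy and continuity hypothesis of Section~\ref{subsec:assumptions}. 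Hence after applying $T_{n,v_n}$, which rescales $I(v_n)$ to $[0,1]$ and correspondingly rescales all descendant intervals by the factor $1/s(v_n)$, the rescaled subtree below $v_n$ is governed by exactly the normalized offspring-and-ratio law of Variant~A started from the unit interval.

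**Step 2: From the subtree to the rescaled measure.** Next I would identify $\mu_{v_n}^{(n)}$ with the pushforward $T_{n,v_n}{}_{*}\big(\mu|_{I(v_n)}/\mu(I(v_n))\big)$. By the construction of $\mu$ as the (weak) limit of the depth-$m$ measures $\mu_m$ with weights proportional to products of contraction factors along root-to-leaf paths, the conditional measure $\mu|_{I(v_n)}/\mu(I(v_n))$ is built from exactly the same weight-assignment rule applied to the subtree below $v_n$: the path-weight to a descendant leaf $u$ factors as (weight to $v_n$) $\times$ (weight from $v_n$ to $u$), and normalizing by $\mu(I(v_n))$ cancels the common prefactor. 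Therefore $\mu_{v_n}^{(n)}$ is, in distribution, the limiting measure $\mu$ of a fresh Variant~A construction — but this uses Lemma~\ref{lem:limitingmeasure} (and the martingale-convergence argument behind it) to guarantee that the limit along the subtree exists and is nontrivial almost surely, for each fixed $v_n$.

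**Step 3: Weak limit points and the uniform law.** Finally, since for *every* depth $n$ and *every* depth-$n$ leaf $v_n$ the rescaled measure $\mu_{v_n}^{(n)}$ has the same law as $\mu$ (Steps 1–2), any subsequential weak limit of $\{\mu_{v_n}^{(n)}\}$ is automatically distributed as $\mu$ — indeed each term already is. Tightness on $[0,1]$ is automatic (all measures are probability measures on a compact set), so weak limit points exist. The multifractal conclusions then transfer verbatim: the $L^q$-spectrum $\tau(q)$, its strict convexity and differentiability on $(q_-,q_+)$ (Lemma~\ref{lem:convexity}), the nontrivial Legendre spectrum $f(\alpha)$ (Corollary~\ref{cor:nontrivial}), and the multifractal formalism (Theorem~\ref{thm:formalism}) all hold for the tangent measures because they hold for $\mu$ and these are distributional statements.

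**Main obstacle.** The delicate point is Step~1's claim that rescaling exactly regenerates the law — one must check that the *scale-dependence* of the contraction distributions $\nu_{s(v)}$ is confined to a pure rescaling of support, so that the normalized ratio law is genuinely scale-invariant. If $\nu_{s}$ were allowed to depend on $s$ in a more complicated way (e.g. a different shape at different scales), the tangent measure would instead be governed by a scale-dependent cascade and one would only get a *class* of limits rather than a single law. I would therefore state explicitly (consistent with the model definition) that $\nu_{s(v)}$ is the pushforward under $x\mapsto s(v)x$ of a fixed law $\nu$ on $(0,1)$; under this reading the regeneration is exact. A secondary technical point is that $\mu(I(v_n))>0$ almost surely on the event that $v_n$ is realized, which follows from the non-degeneracy of the weight martingale in Lemma~\ref{lem:limitingmeasure}, but should be noted to keep the normalization in \eqref{eq:tangentmeasure} well defined.
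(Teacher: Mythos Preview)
Your proposal is correct and follows essentially the same regeneration argument as the paper: the subtree below any leaf, after affine rescaling, reproduces the Variant~A law, so the rescaled (tangent) measures inherit the Variant~A multifractal spectrum. The only difference is that the paper first establishes $\nu_{v,k}\stackrel{d}{=}\mu^A_k$ for finite-depth approximants and then passes to the limit $k\to\infty$, whereas you argue directly at the level of the limiting measure; the paper's own remark following the proof notes this direct route is equivalent, and your explicit flag about the scale-invariance of $\nu_{s(v)}$ (that the normalized ratio law must be independent of the parent scale) is a technical care point the paper leaves implicit.
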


\begin{proof}
A full proof is given in Appendix~\ref{app:A5}.
\end{proof}

\begin{corollary}[Anchored multifractality via tangents]\label{cor:anchored}
Although the invariant geometric set is $\{0\}$ in Variant~B, the class of tangent measures at $\{0\}$ is almost surely multifractal with spectrum $f(\alpha)$ identical to Variant~A.
\end{corollary}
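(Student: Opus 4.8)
The plan is to obtain Corollary~\ref{cor:anchored} by simply chaining Theorem~\ref{thm:tangent} with the Variant~A analysis (Lemmas~\ref{lem:existence-spectrum}--\ref{lem:convexity}, Corollary~\ref{cor:nontrivial}, Theorem~\ref{thm:formalism}), the only nonroutine point being the upgrade from ``equal in distribution'' to an almost-sure statement about the tangent family. The logical skeleton I would follow is: (i) the $L^q$-spectrum $\tau$ and the multifractal spectrum $f$ of the Variant~A limit measure $\mu$ are deterministic functions, by a zero--one law for the tail of the multiplicative cascade; (ii) $\tau$ and $f$ are invariant under the affine (hence bi-Lipschitz) rescalings $T_{n,v_n}$, so they are functionals of the \emph{law} of the measure only; (iii) Theorem~\ref{thm:tangent} makes every weak subsequential limit of the rescaled neighbourhoods of $\{0\}$ equal in law to $\mu$; hence the tangent measures carry the same deterministic $(\tau, f)$ almost surely; (iv) Corollary~\ref{cor:nontrivial} makes this $f$ strictly concave and positive on a nondegenerate $\alpha$-interval, so the tangent family is genuinely multifractal.

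First I would fix $q \in (q_-, q_+)$ and recall from Lemma~\ref{lem:existence-spectrum} that for $\mu$ the limit $\tau(q) = \lim_n \log Z_n(q)/\log \varepsilon_n$ exists a.s.\ and is finite, and from Lemma~\ref{lem:convexity} that it is strictly convex and differentiable there. Next I would argue $\tau(q)$ is a.s.\ constant: the limit is unchanged by altering finitely many generations of the offspring counts $\{N_u\}$ and contraction factors $\{R_{k,v}\}$, so it is measurable with respect to the tail $\sigma$-algebra of this independent family, which is trivial by the Kolmogorov zero--one law; therefore $\tau(q) = \bar\tau(q)$ for a deterministic $\bar\tau$, and consequently $f(\alpha) = \inf_q\bigl(q\alpha - \bar\tau(q)\bigr)$ is deterministic as well (equivalently, $\bar\tau$ is the unique solution of the cascade pressure equation $\mathbb{E}\bigl[\sum_i W_i^{q} r_i^{-\tau(q)}\bigr] = 1$). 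Then I would invoke Theorem~\ref{thm:tangent} along the nested chain of leaves $v_n$ whose intervals $I(v_n) \downarrow \{0\}$ in Variant~B: every weak subsequential limit $\mu_\infty$ of $\{\mu_{v_n}^{(n)}\}$ has the same distribution as $\mu$. Since the affine maps $T_{n,v_n}$ distort Hölder exponents, local dimensions and partition sums only by bounded multiplicative factors, the pair $(\tau_{\mu_\infty}, f_{\mu_\infty})$ has the same law as $(\tau_\mu, f_\mu)$; as the latter is a.s.\ the deterministic pair $(\bar\tau, \bar f)$, equality in law forces $\tau_{\mu_\infty} = \bar\tau$ and $f_{\mu_\infty} = \bar f$ almost surely, uniformly over all subsequences and all admissible rescaling leaves. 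Finally Corollary~\ref{cor:nontrivial} gives that $\bar f$ is strictly concave and strictly positive on a nondegenerate interval, so the tangent class at $\{0\}$ is a.s.\ multifractal with spectrum identical to Variant~A, and the multifractal formalism of Theorem~\ref{thm:formalism} transfers verbatim; this closes the anchored half of Theorem~\ref{thm:multifractality}.

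The hard part will be exactly step (i)--(iii): turning the distributional equivalence of Theorem~\ref{thm:tangent} into an almost-sure identity of spectra, which is where the tail triviality of the cascade (making $\tau$ and $f$ deterministic) and the bi-Lipschitz invariance of these spectra are both indispensable. A secondary point needing care is that Theorem~\ref{thm:tangent} permits several subsequential limit points and several natural choices of rescaling leaf along the anchored chain; I would check the conclusion is uniform over all of them, which holds precisely because the limiting spectrum is the single deterministic function $\bar\tau$ irrespective of the chosen subsequence. Everything else — compactness of the support chain, convergence of the partition sums, positivity of $\bar f$ on an interval — is inherited directly from the cited Variant~A results.
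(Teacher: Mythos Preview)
Your proposal is correct and in fact considerably more careful than the paper's own treatment: the paper gives no separate proof of this corollary at all, treating it as an immediate consequence of Theorem~\ref{thm:tangent} and Theorem~\ref{thm:formalism} (the parenthetical citation \emph{is} the proof). The almost-sure multifractality of the tangent class is already asserted as part of the conclusion of Theorem~\ref{thm:tangent}, whose appendix proof simply says that since tangent measures coincide in law with $\mu^A$, they ``inherit the same strictly convex $L^q$-spectrum $\tau(q)$ and multifractal spectrum $f(\alpha)$''; the corollary merely restates this.

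What you add is precisely the step the paper leaves implicit: the passage from ``equal in distribution to $\mu^A$'' to ``almost surely has spectrum $\bar\tau$''. Your mechanism---tail-triviality of the i.i.d.\ environment forces $\tau$ (hence $f$) to be a deterministic function, so equality in law of measures forces equality of spectra---is the natural way to justify that inference, and it is the argument the paper would need if challenged. Your observation that the conclusion must be checked uniformly over subsequences and over the choice of rescaling leaf, and that determinism of $\bar\tau$ handles this automatically, is likewise a point the paper glosses over. So your route is the same in spirit but strictly more rigorous; nothing in it is wrong or superfluous.
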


%-------------------------------------------
\subsection{Summary}

The results above complete the proof of Theorem~\ref{thm:multifractality}. In the non-anchored variant, Lemma~\ref{lem:limitingmeasure} establishes that $\{\mu_n\}$ converges almost surely to a limiting random probability measure $\mu$ supported on a compact, totally disconnected, uncountable attractor. Lemmas~\ref{lem:existence-spectrum} and~\ref{lem:convexity}, together with Theorem~\ref{thm:formalism}, show that the associated $L^q$-spectrum $\tau(q)$ exists almost surely, is strictly convex on $(q_-,q_+)$, and yields a nontrivial multifractal spectrum $f(\alpha)$ via Legendre transform. Theorem~\ref{thm:tangent} extends these conclusions to the anchored variant by showing that, although the invariant geometric set collapses to $\{0\}$, affine rescalings at finite depth regenerate the same multifractal law through tangent measures.

\subsection{Numerical illustration}\label{subsec:numerical-illustration}

Although the main results are established analytically, a simulation helps visualize the branching-process RIFS. Figure~\ref{fig:numerical} shows a typical realization to depth~20 (2,270 leaves at the final iterate), with contraction ratios and translations drawn independently from uniform distributions. The top panel shows the distribution of probability mass across normalized position and depth, and the bottom panel shows an estimated multifractal spectrum obtained from partition sums and the Legendre transform. Computational details are given in Appendix~\ref{app:B4}.

\begin{figure}[htbp]
  \centering
  \includegraphics[width=0.9\textwidth]{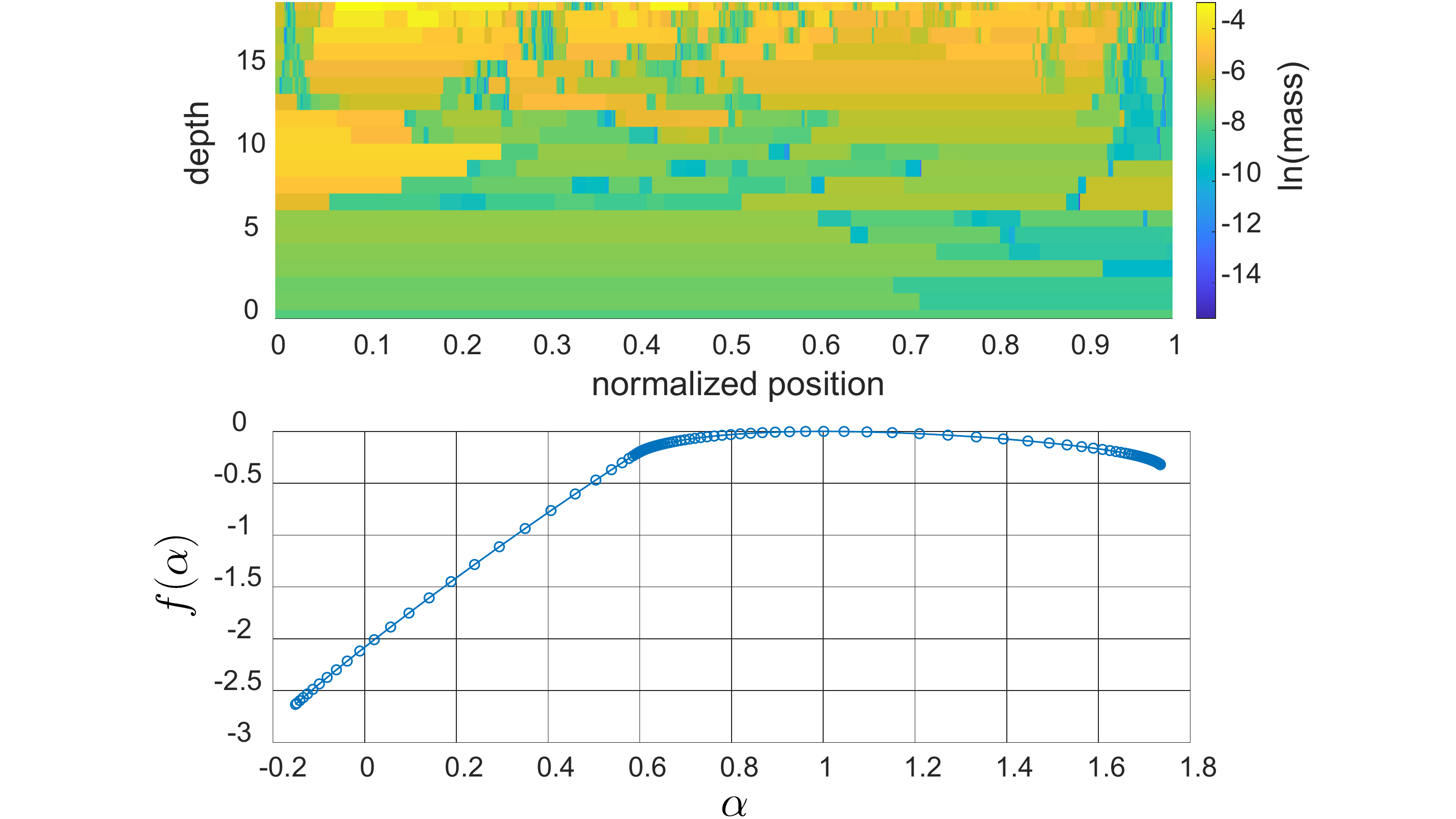}
  \caption{Numerical illustration of the branching-process RIFS. \textbf{Top:} Distribution of interval masses across normalized position and iteration depth (logarithmic scale; warmer colors indicate greater mass). Each row corresponds to a generation, showing the recursive embedding of lineages into smaller scales. \textbf{Bottom:} Estimated multifractal spectrum $f(\alpha)$, obtained from partition sums and Legendre transform of the scaling exponents. The strictly convex curve is consistent with Theorem~\ref{thm:formalism}, showing the coexistence of multiple scaling exponents within a single realization.}
  \label{fig:numerical}
\end{figure}

The numerical spectrum in Figure~\ref{fig:numerical} provides a concrete visualization of the nontrivial multifractality established analytically: even finite-depth realizations exhibit a broad range of effective scaling exponents within a single realization.

\section{Discussion}\label{sec:discussion}

These results connect the branching-process RIFS to several classical lines of work. On the mathematical side, they place it within the same general regime as multiplicative cascades while extending them to a genealogically dynamic Galton--Watson setting. Random iterated function systems have long been known to produce multifractal measures \citep{BarralJin2010,Falconer1997,Graf1987,KahanePeyriere1976,Mandelbrot1974}, whereas branching processes capture stochastic reproduction without generating multifractality on their own. The branching-process RIFS synthesizes the two: each leaf initiates a subtree at a smaller scale, producing a recursive cascade of genealogically nested trees. The resulting attractor is genealogically structured, with each point corresponding to the limit of a lineage. The anchored variant adds a further feature by collapsing the invariant set to $\{0\}$ while tangent measures retain multifractality. Taken together, these properties define a class of random fractal constructions distinct from classical Mandelbrot cascades.

Biologically, the model is motivated by the tree of life. Standard phylogenetic models are typically static realizations of genealogies, constructed via coalescent or birth--death processes~\citep{Hudson1990,Stadler2010}. By contrast, the present construction is dynamic: every leaf is simultaneously the root of a subtree, so that phylogeny is recursively generated rather than sampled at once. Between the two variants, the anchored case is the biologically motivated one: living systems are finite, and no lineage persists indefinitely. The collapse of the invariant set therefore encodes finiteness and mortality, while the persistence of multifractality through tangent measures shows that complex scaling structure arises at finite depth, precisely while the system is extant.

The proof establishes that multifractality is a consequence of nestedness, duality, and randomness, rather than an added hypothesis. As a minimal model, the branching-process RIFS provides a baseline theorem: more elaborate biological mechanisms can be expected to enrich, rather than suppress, the multifractal spectrum. In this sense, multifractality emerges as a structurally robust property of genealogically organized systems.

This perspective helps explain why multifractal signatures are repeatedly observed in biological data. In physiological time series such as human heartbeat intervals, scaling analyses reveal a spectrum of local Hölder exponents rather than a single fractal dimension~\citep{Ivanov1999}. Ecological succession produces concave multifractal spectra~\citep{Saravia2012}, and genomic sequence analyses show broad distributions of scaling exponents across the human genome~\citep{Moreno2011}. Reviews of fractal and multifractal methods document similar findings across biological imaging and physiological systems~\citep{LopesBetrouni2009}. These observations are consistent with the multifractal spectrum $f(\alpha)$ established in Theorem~\ref{thm:formalism}, suggesting that the empirical ubiquity of multifractality reflects the fundamental organizational principles of living systems.

Relaxing the model's simplifying assumptions—for instance, allowing heavy-tailed offspring distributions, heterogeneous contraction laws, or higher-dimensional embeddings—may further enrich the multifractal geometry and broaden its biological relevance. These directions point to a wider mathematical landscape in which multifractality is not only persistent but deepened by additional mechanisms of life.

\section{Conclusion}\label{sec:conclusion}

We have shown that a branching-process random iterated function system, constructed from Galton--Watson reproduction coupled with continuous geometric contractions, gives rise to multifractality under minimal and explicit assumptions. In the non-anchored variant, the limiting measure is supported on a nontrivial Cantor-like attractor and exhibits a strictly convex $L^q$-spectrum. In the biologically motivated anchored variant, although the invariant geometric set collapses to the singleton $\{0\}$, multifractality persists through tangent measures obtained from finite-depth rescalings.

From a mathematical perspective, the result extends classical multifractal cascade theory to a genealogically dynamic setting, in which branching structure and geometry are generated recursively rather than imposed on a fixed support. The branching-process RIFS defines a class of random fractal measures that lie outside standard Galton--Watson trees and classical random iterated function systems, while remaining amenable to analysis via established probabilistic techniques.

From a biological perspective, the theorem makes explicit the scaling consequences of principles that are already central to evolutionary theory. If living systems are organized through nested descent, whole–part duality, and stochastic reproduction~\citep{Kimura1983,Lewontin1970,SzathmarySmith1995}, then multifractal structure arises naturally from these foundations. In this sense, the result provides a theoretical explanation for the widespread appearance of multifractal scaling in biological data, situating it as a structural feature of genealogically organized systems rather than as an artifact of particular models or measurements.

%============================ Back matter ========================
\bmhead{Acknowledgements}
The author thanks Raissa D'Souza for her guidance and mentorship throughout the doctoral studies, of which this work forms a part. Gratitude is also extended to Guram Mikaberidze for many helpful conversations, and to the Biological and Agricultural Engineering Department at UC Davis for supporting this research and providing an enriching academic environment.

\section*{Declarations}
\textbf{Funding} This research received no external funding.\\
\textbf{Conflict of interest} The author declares no competing interests.\\
\textbf{Data availability} Data for the numerical example are publicly available at \url{https://github.com/KevinAndrewHudnall/Proof-of-ToL-Multifractality}.\\
\textbf{Code availability} Code for running the branching-process RIFS and calculating the spectra is available in the same repository as the data.\\
\textbf{Ethics approval/Consent/Materials/Author contribution} Not applicable.
\textbf{Use of generative AI in the writing process} During the preparation of this work, OpenAI's ChatGPT was used to study unfamiliar proof techniques, suggest relevant references, and provide feedback on structure and clarity after the author had constructed the full arguments. It was also used during final manuscript preparation to assist with text refinement, grammar checking, and rewording for clarity and concision. All conceptual content, mathematical arguments, and original writing were developed by the author. The AI was not used to generate new ideas, mathematical structures, figures, or to interpret data. The author takes full responsibility for the correctness and final content of the manuscript.

%============================ Appendices ========================
\begin{appendices}

\section{Additional proofs}\label{app:A}
\setcounter{equation}{0}
\setcounter{theorem}{0}

\numberwithin{equation}{section}
\renewcommand{\theequation}{A.\arabic{equation}}
\renewcommand{\thetheorem}{A.\arabic{theorem}}
\renewcommand{\thelemma}{A.\arabic{lemma}}
\renewcommand{\thecorollary}{A.\arabic{corollary}}
\renewcommand{\theremark}{A.\arabic{remark}}
This appendix provides the full proofs of Lemmas~\ref{lem:limitingmeasure}--\ref{lem:convexity} and Theorems~\ref{thm:formalism}--\ref{thm:tangent}.

\subsection{Proof of Lemma~\ref{lem:limitingmeasure} (Existence of the limiting measure)}\label{app:A1}
\begin{proof}
At each depth $n$, the measure $\mu_n$ assigns mass proportional to the product of contraction factors along each root-to-leaf path. Since all contraction factors lie in $(0,1)$ and satisfy $R_{k,v} < s(v)$, the diameters of intervals shrink strictly with depth. Thus each infinite path generates a nested sequence of closed intervals $\{I(v_n)\}$ with diameters tending to zero. By Cantor's intersection theorem~\citep{Cohn2013}, the intersection $\cap_{n=0}^\infty I(v_n)$ is a singleton.

The attractor $\mathcal{A}$ is defined as the set of all such limit points over all infinite paths. Because it is the intersection of a nested sequence of unions of closed intervals, $\mathcal{A}$ is closed. Being contained in $[0,1]$, it is also bounded, hence compact. By the non-degeneracy assumptions of Section~\ref{subsec:assumptions}---namely, that the offspring distribution assigns positive probability to at least two children and the contraction distribution assigns positive probability to at least two distinct values---distinct paths branch apart infinitely often, so $\mathcal{A}$ is totally disconnected and uncountable almost surely.

For convergence of measures, note that the weights $\mu_n(I(v))$ form a martingale with respect to the natural filtration generated by the branching process and contraction variables. By the Kahane--Peyrière martingale convergence theorem for multiplicative cascades~\citep{KahanePeyriere1976}, the martingale converges almost surely to a random probability measure $\mu$ supported on $\mathcal{A}$.
\end{proof}

\subsection{Proof of Lemma~\ref{lem:existence-spectrum} (Existence of the $L^q$-spectrum)}\label{app:A2}
\begin{proof}
Write $W_v$ for normalized contraction weights. Then $\mu(I_i^{(n)})$ is the product of the independent weights along the corresponding length-$n$ path, i.e., a multiplicative cascade.

Fix $q \in \mathbb{R}$. At depth $n$,
\begin{equation}
Z_n(q) \;=\; \sum_{\substack{\text{paths } |v| = n}} \; \prod_{j=1}^{n} W_{v_j}^{\,q}.
\end{equation}
Taking logs, $\log Z_n(q)$ is subadditive in $n$ (standard for branching random cascades), and its increments have finite expectation because the offspring law has finite support and $\log^+ W_v$ has finite moments by the support condition $R_{k,v}\in(0,s(v))$ (i.e., the bounded contraction assumption in Section~\ref{subsec:assumptions}). Hence Kingman's subadditive ergodic theorem~\citep{Kingman1973} yields the almost sure limit
\begin{equation}
\lim_{n\to\infty} \frac{1}{n}\,\log Z_n(q) \;=\; \kappa(q) 
\qquad \text{a.s.}
\end{equation}
for some finite function $\kappa(q)$. On the geometric side, $\varepsilon_n$ decays exponentially in $n$ almost surely because each refinement multiplies diameters by a random factor in $(0,1)$ bounded away from $1$ with positive probability; thus
\begin{equation}
\lim_{n\to\infty} \frac{1}{n}\,\log \varepsilon_n \;=\; \lambda \;<\;0
\qquad \text{a.s.}
\end{equation}
Combining,
\begin{equation}
\tau(q) \;=\; \lim_{n\to\infty} \frac{\log Z_n(q)}{\log \varepsilon_n}
\;=\;\frac{\kappa(q)}{\lambda}
\qquad \text{a.s.,}
\end{equation}
so the limit exists and is finite.
\end{proof}

\begin{remark}
Replacing $\varepsilon_n$ by any depth-wise representative scale with $\log \varepsilon_n \sim \lambda n$ (e.g., median or geometric mean diameter) leaves $\tau(q)$ unchanged. Thus $\tau$ is intrinsic to the cascade.
\end{remark}

\subsection{Proof of Lemma~\ref{lem:convexity} (Domain and convexity of $\tau$)}\label{app:A3}
We prove that there exists an open interval $(q_-,q_+)\ni 0$ on which $\tau(q)$ exists and is finite; that $\tau$ is convex on this interval and strictly convex when the contraction law is non-degenerate; and that $\tau$ is differentiable on the interior. The endpoints $q_\pm$ are determined by the moment behavior of the contraction law together with the bounded offspring distribution.

\subsubsection{Setup and a canonical normalization}
At each refinement, a parent node produces $N$ children (with $N$ supported on a finite set by Section~\ref{subsec:assumptions}) and i.i.d.\ contractions $R_1,\ldots,R_N \in (0,1)$. We use the standard Mandelbrot-cascade normalization
\begin{equation}\label{eq:canonW}
W_i \;=\; \frac{X_i}{\sum_{j=1}^{N} X_j}, 
\qquad
X_i \;=\; R_i^{\beta}, \ \ \beta>0,
\end{equation}
and define
\[
S(q) \;=\; \sum_{i=1}^{N} W_i^{\,q} \;=\; \frac{\sum_{i=1}^{N} X_i^{\,q}}{\bigl(\sum_{i=1}^{N} X_i\bigr)^q},
\qquad
Z_n(q) \;=\; \sum_{|v|=n} \prod_{j=1}^{n} W_{v_j}^{\,q}.
\]
Alternative positive, strictly increasing transforms of $R_i$ followed by renormalization change $\log Z_n(q)$ only by additive $O(n)$ terms with finite expectation (see Remark~\ref{rem:equiv-normalizations}), hence do not affect the existence/finiteness domain of $\tau$.

\subsubsection{Existence and identification of the exponential growth rate}
By Kingman's subadditive theorem~\citep{Kingman1973} plus the i.i.d.\ environment, the limit
\begin{equation}\label{eq:kappa-limit}
\kappa(q) \;=\; \lim_{n\to\infty} \frac{1}{n}\,\log Z_n(q)
\end{equation}
exists almost surely provided $\mathbb{E}\!\left(\log^{+} Z_1(q)\right)<\infty$. Moreover, the branching “many-to-one”/spine law of large numbers (e.g., \citep{BarralJin2010}) gives the identification
\[
\frac{1}{n}\log Z_n(q) \xrightarrow[\ n\to\infty\ ]{\ \mathrm{a.s.}\ } \mathbb{E}\!\left[\log S(q)\right],
\]
so that $\kappa(q)=\mathbb{E}\!\left[\log S(q)\right]$ whenever $\mathbb{E}\!\left[\,|\log S(q)|\,\right]<\infty$. The mesh scale decays exponentially, $\varepsilon_n=\exp(\lambda n+o(n))$ with $\lambda<0$ a.s.\ by Section~\ref{subsec:pressure}, hence
\begin{equation}\label{eq:tau-from-kappa}
\tau(q) \;=\; \lim_{n\to\infty} \frac{\log Z_n(q)}{\log \varepsilon_n} \;=\; \frac{\kappa(q)}{\lambda}
\end{equation}
exists and is finite iff $\mathbb{E}\!\left[\,|\log S(q)|\,\right]<\infty$. Thus the domain of $\tau$ is exactly $\{\,q:\ \mathbb{E}[\,|\log S(q)|\,]<\infty\,\}$.

\subsubsection{Nonnegative $q$: uniform bounds and $q_+=+\infty$}
Since $0<W_i\le 1$ and $\sum_i W_i=1$, for $q\ge 1$ convexity gives $S(q)=\sum_i W_i^{\,q}\le (\sum_i W_i)^q=1$, while the power-mean inequality gives $S(q)\ge N^{\,1-q}$. Hence
\[
(1-q)\log N_{\max} \;\le\; \log S(q) \;\le\; 0,
\]
with $N_{\max}<\infty$ the maximum support value of $N$. For $0\le q\le 1$, the concavity of $x\mapsto x^{q}$ gives $1\le S(q)\le N^{\,1-q}$, hence
\[
0 \;\le\; \log S(q) \;\le\; (1-q)\log N_{\max}.
\]
Therefore $\mathbb{E}[\,|\log S(q)|\,]<\infty$ for all $q\ge 0$; in particular $q_+=+\infty$ under the canonical normalization \eqref{eq:canonW}.

\subsubsection{Negative $q$: lower-tail critical exponent and $q_-$}
Let $q=-t<0$ with $t>0$. Using $X_i=R_i^{\beta}$,
\[
\log S(-t) \;=\; t \log\!\Bigl(\sum_{i=1}^{N} X_i\Bigr) \;+\; \log\!\Bigl(\sum_{i=1}^{N} X_i^{-t}\Bigr).
\]
The first term is harmless: $X_1\in(0,1)$ so $\log^{+}X_1=0$, and since $\sum_i X_i\le N_{\max}$ we have $\log^{+}(\sum_i X_i)\le \log N_{\max}$. Potential divergence comes only from $\sum_i X_i^{-t}$, i.e., from the lower tail of $X_i$ (equivalently, of $R_i$ near $0$). By standard moment theory (e.g., \citep{Billingsley2012}), there exists a (possibly zero) critical exponent $t_\star$. Set
\[
t_\star \;\equiv\; \sup\{\, t>0:\ \mathbb{E}[X_1^{-t}]<\infty \,\}
\;=\; \sup\{\, t>0:\ \mathbb{E}[R_1^{-\beta t}]<\infty \,\}\in[0,\infty].
\]
Using $\sum_i X_i^{-t}\le N_{\max}\,X_{(1)}^{-t}$ with $X_{(1)}=\max_i X_i$,
\[
\log^{+}\!\Bigl(\sum_{i} X_i^{-t}\Bigr) \;\le\; \log N_{\max} + \log^{+}\!\bigl(X_{(1)}^{-t}\bigr)
\;\le\; \log N_{\max} + X_{(1)}^{-t}.
\]
Thus $\mathbb{E}\!\left[\log^{+}(\sum_i X_i^{-t})\right]<\infty$ whenever $\mathbb{E}[X_1^{-t}]<\infty$. Conversely, if $\mathbb{E}[X_1^{-t}]=\infty$, then since $N\ge 1$ with positive probability and $X_1^{-t}\le \sum_i X_i^{-t}$, we get $\mathbb{E}\!\left[\log^{+}(\sum_i X_i^{-t})\right]=\infty$. Therefore
\[
\mathbb{E}\!\left[\,|\log S(-t)|\,\right]<\infty \ \Longleftrightarrow\ t<t_\star,
\quad\text{and}\quad
q_- \;=\; -t_\star.
\]

\subsubsection{Convexity, strict convexity, and differentiability}
For any fixed weights $W_1,\ldots,W_N\in(0,1)$ with $\sum_i W_i=1$, the map $q\mapsto \log\!\bigl(\sum_i W_i^{\,q}\bigr)$ is a log-sum-exp and therefore convex. Taking expectations preserves convexity, so $q\mapsto \kappa(q)=\mathbb{E}[\log S(q)]$ is convex on $(q_-,q_+)$; since $\lambda<0$, $\tau(q)=\kappa(q)/\lambda$ is likewise convex there.

If the contraction law is non-degenerate (supported on at least two distinct values with positive probability), then with positive probability the vector $W=(W_i)$ has at least two distinct coordinates, and the log-sum-exp is strictly convex on that event; averaging yields strict convexity of $\kappa$ (hence $\tau$) on $(q_-,q_+)$. (Equivalently, by Hölder; see, e.g., \citep{BarralJin2010,KahanePeyriere1976}, $\Lambda(q)=\log \mathbb{E}[R^q]$ is strictly convex when the $R$-law is non-degenerate, and this strictness transfers to $\kappa$ and $\tau$.)

Finally, differentiability of $\tau$ on the interior follows from standard convex analysis plus dominated differentiation for the random log-sum-exp: for $q$ in a compact subinterval of $(q_-,q_+)$,
\[
\frac{\partial}{\partial q}\log S(q)
\;=\;
\frac{\sum_{i} W_i^{\,q}\log W_i}{\sum_{i} W_i^{\,q}}
\]
has an integrable envelope (the negative tail is controlled by the same $t_\star$ margin used above), so $\kappa'(q)=\mathbb{E}[\partial_q \log S(q)]$ exists and is continuous, and $\tau'(q)=\kappa'(q)/\lambda$. This completes the proof.

\begin{remark}[Right endpoint]\label{rem:right-endpoint}
In the canonical normalization $W_i=X_i/\sum_{j} X_j$ with $X_i\in(0,1)$, we always have $\sum_{i} W_i^q\le 1$ for $q\ge 1$ and $\sum_{i} W_i^q\le N^{\,1-q}$ for $0\le q\le 1$. Hence $\log S(q)$ is bounded above and below uniformly for all $q\ge 0$. This guarantees not only finiteness but also convexity of $\tau$ on the entire half-line $[0,\infty)$. Thus $q_+=+\infty$. If a different but equivalent normalization is used (e.g., $W_i\propto R_i^\beta$ with a deterministic renormalization), the same conclusion holds because $\log S(q)$ changes by an additive term with finite expectation.
\end{remark}

\begin{remark}[Equivalence across normalizations]\label{rem:equiv-normalizations}
Suppose two weight schemes $W_i$ and $\widetilde{W}_i$ are defined by positive, continuous, strictly increasing transforms of each $R_i$, followed by normalization to sum to~$1$. Then, after \citep{KahanePeyriere1976}, there exist $c_1,c_2\in\mathbb{R}$ (depending on $q$) such that
\[
c_1 \;\le\; \log\!\Bigl(\sum_{i} W_i^{\,q}\Bigr) \;-\; \log\!\Bigl(\sum_{i} \widetilde{W}_i^{\,q}\Bigr) \;\le\; c_2
\quad \text{a.s.}
\]
Consequently, $\mathbb{E}[\,|\log S(q)|\,]<\infty$ (and the finiteness domain for $\kappa(q)$) is the same for both schemes; in particular, the endpoints $q_\pm$ are unchanged.
\end{remark}

\begin{remark}[Strict convexity condition]\label{rem:strict-convex}
The values of $q_\pm$ depend on the tails of the contraction distribution. If the distribution has finite support contained in $(0,1)$, then $\mathbb{E}[R^{-\beta t}]<\infty$ for all $t\ge 0$, so $q_-=-\infty$ and the domain $(q_-,q_+)$ extends across all of $\mathbb{R}$. In this case, $\tau$ remains convex everywhere but loses strict convexity: with only finitely many contraction values, $\tau$ can become affine and the multifractal spectrum collapses to a single point. More generally, strict convexity holds exactly when the contraction law is non-degenerate—that is, supported on at least two distinct values with positive probability. Non-degeneracy of the contraction distribution is therefore the precise condition for obtaining a nontrivial multifractal spectrum.
\end{remark}

\subsection{Proof of Theorem~\ref{thm:formalism} (Multifractal formalism on an interval)}\label{app:A4}
We prove that, under the assumptions of Section~\ref{subsec:assumptions} and a quasi\mbox{-}Bernoulli (weak Gibbs) property for leaf weights, the multifractal formalism holds on $(q_-,q_+)$: for every $q\in(q_-,q_+)$ with $\alpha(q)=\tau'(q)$,
\[
f(\alpha(q)) \;=\; q\,\alpha(q) - \tau(q) \;=\; \tau^\ast(\alpha(q)).
\]
Throughout, $\varepsilon_n$ denotes a representative mesh scale at depth $n$ with $\log \varepsilon_n \sim \lambda n$ and $\lambda<0$ a.s.\ (Remark~\ref{rem:right-endpoint}), and $I(v)$ a cylinder (leaf interval) at depth $|v|=n$.

\subsubsection{Roadmap}
By Lemma~\ref{lem:convexity}, $\tau$ is finite and convex on $(q_-,q_+)$, differentiable on the interior, and $\alpha(q)=\tau'(q)$ exists there. Thermodynamic-formalism arguments (e.g., \citep{BarralJin2010,KahanePeyriere1976}) yield a weak Gibbs property and identify the equilibrium (tilted) measures. The Legendre upper bound follows from a standard covering/counting step; the lower bound is obtained by constructing a Frostman measure supported on $E(\alpha(q))$ from the tilted measures. Together these give $f=\tau^\ast$ on $(q_-,q_+)$.

\subsubsection{Preliminaries: level sets and the Legendre upper bound}
Define the level sets of local scaling exponents
\[
E(\alpha)\;=\;\Bigl\{x:\lim_{r\downarrow 0}\frac{\log \mu(B(x,r))}{\log r}=\alpha\Bigr\}
\;=\;
\Bigl\{x:\ \lim_{n\to\infty}\frac{\log \mu(I(v_n(x)))}{\log \varepsilon_n}=\alpha\Bigr\},
\]
since $\log \varepsilon_n \sim \lambda n$ and diameters along cylinders are comparable to $\varepsilon_n$ (Section~\ref{subsec:limit-measure}).

For any $\alpha\in\mathbb{R}$ and $\eta>0$, split the depth-$n$ cylinders into
\[
\mathcal{C}_n(\alpha,\eta)\;=\;\Bigl\{I(v):\ \varepsilon_n^{\,\alpha+\eta}\ \le\ \mu(I(v))\ \le\ \varepsilon_n^{\,\alpha-\eta}\Bigr\}.
\]
For any $q\in\mathbb{R}$, set $Z_n(q)=\sum_{|v|=n}\mu(I(v))^q$. If $q>0$, then for $I(v)\in \mathcal{C}_n(\alpha,\eta)$ we have $\mu(I(v))\ge \varepsilon_n^{\alpha+\eta}$, hence
\[
Z_n(q)\ \ge\ \sum_{I(v)\in\mathcal{C}_n(\alpha,\eta)} \mu(I(v))^q
\ \ge\ \#\mathcal{C}_n(\alpha,\eta)\ \varepsilon_n^{\,q(\alpha+\eta)}.
\]
If $q<0$, then $\mu(I(v))\le \varepsilon_n^{\alpha-\eta}$ yields
\[
Z_n(q)\ \ge\ \sum_{I(v)\in\mathcal{C}_n(\alpha,\eta)} \mu(I(v))^q
\ \ge\ \#\mathcal{C}_n(\alpha,\eta)\ \varepsilon_n^{\,q(\alpha-\eta)}.
\]
Thus, in both cases,
\begin{equation}\label{eq:counting-bound}
\#\mathcal{C}_n(\alpha,\eta)\ \le\
\begin{cases}
Z_n(q)\ \varepsilon_n^{-\,q(\alpha+\eta)}, & q>0,\\[2pt]
Z_n(q)\ \varepsilon_n^{-\,q(\alpha-\eta)}, & q<0.
\end{cases}
\end{equation}
Taking $\limsup_{n\to\infty}$, using $\displaystyle \lim_{n\to\infty}\frac{\log Z_n(q)}{\log \varepsilon_n}=\tau(q)$, we obtain
\[
\limsup_{n\to\infty}\frac{\log \#\mathcal{C}_n(\alpha,\eta)}{\log \varepsilon_n}
\ \le\
\begin{cases}
-\,q\alpha\;-\;q\eta\;+\;\tau(q), & q>0,\\[2pt]
-\,q\alpha\;+\;q\eta\;+\;\tau(q), & q<0.
\end{cases}
\]
Letting $\eta\downarrow 0$ and optimizing over $q\in\mathbb{R}$ gives the standard Legendre upper bound
\begin{equation}\label{eq:legendre-upper}
\dim_H E(\alpha)\ \le\ \inf_{q\in\mathbb{R}} \{\, q\alpha - \tau(q)\,\}\ =\ \tau^\ast(\alpha).
\end{equation}
Thus it remains to prove the matching lower bound $\dim_H E(\alpha(q))\ge \tau^\ast(\alpha(q))$ for $q\in(q_-,q_+)$.

\subsubsection{Quasi-Bernoulli/Gibbs property and tilted measures}
Assume a quasi\mbox{-}Bernoulli (weak Gibbs) property: there exists $C\ge 1$ such that for all concatenated cylinders $I(uv)$,
\begin{equation}\label{eq:qB}
C^{-1}\,\mu(I(u))\,\mu_u(I(v))\;\le\;\mu(I(uv))\;\le\;C\,\mu(I(u))\,\mu_u(I(v)),
\end{equation}
where $\mu_u$ is the descendant law starting at node $u$ (same distribution as $\mu$ by independence). This is the standard quasi-product structure for random cascades and holds in the present i.i.d.\ setting (see \citep{BarralJin2010,KahanePeyriere1976}).

For fixed $q\in(q_-,q_+)$, define the depth-$n$ tilted (Gibbs) measure on cylinders by
\[
\mathbb{Q}_{q,n}(I(v)) \;=\; \frac{\mu(I(v))^q}{Z_n(q)}.
\]
Let $\mathbb{Q}_q$ be any subsequential weak limit (tightness is standard since the state space is compact and $Z_n(q)$ is finite on $(q_-,q_+)$). Under $\mathbb{Q}_q$, typical cylinders satisfy a Shannon--McMillan--Breiman/Kingman law:
\[
\lim_{n\to\infty} \frac{1}{n}\,\log \mu(I(v_n)) \;=\; \frac{\partial}{\partial q}\Bigl(\frac{1}{n}\log Z_n(q)\Bigr)
\qquad \mathbb{Q}_q\text{-a.s.},
\]
which, by Lemma~\ref{lem:convexity} and dominated convergence, equals $\kappa'(q)=\lambda\,\tau'(q)=\lambda\,\alpha(q)$. Therefore,
\[
\lim_{n\to\infty} \frac{\log \mu(I(v_n))}{\log \varepsilon_n} \;=\; \alpha(q)
\qquad \mathbb{Q}_q\text{-a.s.},
\]
so $\mathbb{Q}_q\!\bigl(E(\alpha(q))\bigr)=1$.

\subsubsection{Frostman measure and the lower bound}
To obtain a lower bound on $\dim_H E(\alpha(q))$, construct a Frostman measure supported on $E(\alpha(q))$. Define $\nu_q$ as any weak limit of $\mathbb{Q}_{q,n}$. Equivalently, for cylinders,
\[
\nu_q(I(v)) \;=\; \lim_{n\to\infty} \mathbb{Q}_{q,n}(I(v))
\ \asymp\ \frac{\mu(I(v))^q}{Z_{|v|}(q)}
\ \asymp\ \varepsilon_{|v|}^{\,q\alpha(q)-\tau(q)},
\]
with constants independent of $n$ (by \eqref{eq:qB}); by the previous step, $\nu_q(E(\alpha(q)))=1$.

Let $x\in E(\alpha(q))$ and choose $n$ so that $\varepsilon_{n+1}<r\le \varepsilon_n$. In one dimension a ball $B(x,r)$ meets at most a bounded number $M$ (a geometric constant) of depth-$n$ cylinders. Using quasi-Bernoulli to compare neighboring cylinders and the estimate
\[
\nu_q\bigl(I(v_n(x))\bigr)\ \asymp\ \frac{\mu(I(v_n(x)))^q}{Z_n(q)}\ \asymp\ \varepsilon_n^{\,q\alpha(q)-\tau(q)},
\]
we obtain
\[
\nu_q(B(x,r))
\ \le\ C \sum_{\substack{I \subset B(x,r)\\ |I|=n}} \nu_q(I)
\ \le\ C' M\, \nu_q\bigl(I(v_n(x))\bigr)
\ \lesssim\ \varepsilon_n^{\,q\alpha(q)-\tau(q)}
\ \asymp\ r^{\,q\alpha(q)-\tau(q)}.
\]
By Frostman's lemma, $\dim_H E(\alpha(q))\ge q\alpha(q)-\tau(q)=\tau^\ast(\alpha(q))$. Combined with the upper bound \eqref{eq:legendre-upper}, this gives
\[
\dim_H E(\alpha(q)) \;=\; \tau^\ast(\alpha(q))
\qquad \text{for all } q\in(q_-,q_+).
\]
This completes the proof of Theorem~\ref{thm:formalism}.

\begin{remark}[Quasi-Bernoulli property in the branching-process RIFS]\label{rem:qB-holds}
In the i.i.d.\ setting of Section~2.2, the quasi\mbox{-}Bernoulli (weak Gibbs) property required in Theorem~\ref{thm:formalism} is automatically satisfied. At each node the offspring number is sampled independently from a finite-support Galton--Watson distribution, and the contraction factors are drawn i.i.d.\ in $(0,1)$. Because the subtree environment below any node has the same distribution as at the root, and because interval weights factor multiplicatively along paths, the product-structure estimates of \citep{KahanePeyriere1976} apply. In particular, for concatenated cylinders $I(uv)$ one has
\[
C^{-1}\,\mu(I(u))\,\mu_u(I(v))\;\le\;\mu(I(uv))\;\le\;C\,\mu(I(u))\,\mu_u(I(v))
\]
for a constant $C$ independent of $u,v$. Thus the cascade law underlying the branching-process RIFS is quasi\mbox{-}Bernoulli, and the hypotheses of Theorem~\ref{thm:formalism} are satisfied.
\end{remark}

\subsection{Proof of Theorem~\ref{thm:tangent} (Tangent-measure equivalence in the anchored case)}\label{app:A5}
Let $\mu_n$ denote the depth-$n$ cascade measure in Variant~B, and fix a leaf $v$ at depth $n$. Write $I(v)\subset[0,1]$ for its cylinder interval and let
\[
S_v:I(v)\longrightarrow[0,1]
\]
be the unique affine rescaling with $S_v(I(v))=[0,1]$.

\subsubsection{Tangent (blow-up) candidates}
For each $k\ge 0$, define the rescaled descendant measure
\[
\nu_{v,k}\;:=\;\frac{\mu_{n+k}\!\restriction_{I(v)}}{\mu_n(I(v))}\circ S_v^{-1}.
\]
Equivalently, $\nu_{v,k}$ assigns to each depth-$k$ subcylinder of $I(v)$ its $\mu$-mass relative to $I(v)$, transported to $[0,1]$. Tangent measures at $v$ are weak limits of $\nu_{v,k_j}$ along subsequences $k_j\to\infty$.

\subsubsection{Regeneration at a leaf}
By construction, below any node $v$ the environment is independent of the past and i.i.d.\ with the root law: offspring numbers are Galton--Watson with finite support; contractions lie in $(0,1)$ and are i.i.d.; siblings are independent. The anchoring rule in Variant~B fixes the absolute placement of children inside $I(v)$, but after composing with $S_v$ this placement is erased. Consequently, the law of the subtree under $v$ (viewed through $S_v$) coincides with the root law of Variant~A. Hence, for every fixed $k\ge 0$,
\[
\nu_{v,k}\ \stackrel{d}{=}\ \mu^{A}_k,
\]
where $\mu^A_k$ denotes the depth-$k$ cascade approximant in Variant~A (non-anchored).

\subsubsection{Tightness and identification of limits}
The sequence $(\mu^A_k)_{k\ge 0}$ is tight and converges weakly (by the standard multiplicative-cascade martingale convergence theorem, e.g.\ \citep{KahanePeyriere1976}). Thus $\mu^A_k\Rightarrow\mu^A$ for some random limit measure $\mu^A$. Since $\nu_{v,k}\stackrel{d}{=}\mu^A_k$ for each $k$, we have
\[
\nu_{v,k}\ \Rightarrow\ \nu_v \quad\text{with}\quad \nu_v\ \stackrel{d}{=}\ \mu^A.
\]
Therefore every tangent measure at $v$ has the same distribution as the non-anchored limit $\mu^A$.

\subsubsection{Conclusion}
Because tangent measures in Variant~B coincide in law with $\mu^A$, they inherit the same strictly convex $L^q$-spectrum $\tau(q)$ and multifractal spectrum $f(\alpha)$ established for Variant~A. This completes the proof of Theorem~\ref{thm:tangent}.

\begin{remark}[Direct tangent construction from the limit measure]\label{rem:direct-tangent}
One may equivalently pass directly from the Variant~B limit $\mu$ instead of $(\mu_n)$. For a point $x\in I(v)$, define
\[
\nu_{v,k}\;:=\;\mu\!\restriction_{I(v)}\circ S_v^{-1}
\]
after $k$ further refinements below $v$. This construction yields the same law as $\mu^A_k$, and hence $\nu_{v,k}\Rightarrow \mu^A$. The identification of tangent measures therefore holds equally at the level of the limit measure.
\end{remark}

\section{Corollaries and Examples}\label{app:B}
\setcounter{equation}{0}
\setcounter{theorem}{0}

\numberwithin{equation}{section}
\renewcommand{\theequation}{B.\arabic{equation}}
\renewcommand{\thetheorem}{B.\arabic{theorem}}
\renewcommand{\thelemma}{B.\arabic{lemma}}
\renewcommand{\thecorollary}{B.\arabic{corollary}}
\renewcommand{\theremark}{B.\arabic{remark}}
This appendix collects several direct consequences of the main results, along with an explicit worked example and the computational details behind the numerical illustration in Section~\ref{subsec:numerical-illustration}. 
The aim is to show how the abstract theorems of Section~\ref{sec:results} translate into concrete formulas, degeneracy cases, and tractable benchmarks for simulation.

\subsection{Dimensional formula}\label{app:B1}
For any $q \in (q_-,q_+)$, write $\alpha(q)=\tau'(q)$. 
The Hausdorff dimension of the level set
\[
E(\alpha)=\Bigl\{x:\ \lim_{n\to\infty}\frac{\log\mu\bigl(I^{(n)}(x)\bigr)}{\log\varepsilon_n}=\alpha\Bigr\}
\]
is almost surely given by
\[
\dim_H E(\alpha(q)) \;=\; f(\alpha(q)) \;=\; q\alpha(q)-\tau(q) \;=\; \tau^\ast(\alpha(q)).
\]
This is the standard multifractal formalism (see Section~\ref{subsec:pressure} Theorem~\ref{thm:formalism}) realized in the branching-process RIFS.

\subsection{Degeneracy and stability}\label{app:B2}
The spectrum collapses precisely when the induced sibling weights are almost surely constant. 
In particular, if the contraction law is degenerate (e.g.\ $R\equiv r\in(0,1)$), then $X_i=r^\beta$ for all siblings and $W_i\equiv1/N$, so $\tau$ is affine and $f$ is trivial (a point). 
Conversely, whenever the contraction law is non-degenerate (supported on at least two values with positive probability), the induced weights are non-trivial with positive probability and $\tau$ is strictly convex (cf. Lemma~\ref{lem:convexity}), yielding a nontrivial (strictly concave) spectrum. 
Small perturbations of the underlying laws preserve strict convexity, and the spectrum varies continuously (stability under weak/parametric perturbations with the moment bounds in Section~\ref{subsec:assumptions}).

\subsection{Worked example: explicit cascade}\label{app:B3}
Take deterministic binary splitting $N\equiv2$, and i.i.d.\ contractions
\[
\mathbb{P}(R=1/3)=\mathbb{P}(R=2/3)=1/2,
\]
with canonical normalization $W_i=X_i/\sum_{j} X_j$, $X_i=R_i^\beta$ (set $\beta=1$ for simplicity). 
Then at one step
\[
S(q)=\frac{R_1^q+R_2^q}{(R_1+R_2)^q}.
\]
Enumerating the four pairs $(R_1,R_2)\in\{1/3,2/3\}^2$ gives a closed form for the pressure
\[
\kappa(q)=\mathbb{E}\bigl[\log S(q)\bigr]
=\tfrac{1}{2}\,\log\!\left(2\cdot 2^{-q}\,\Bigl(\bigl(\tfrac{1}{3}\bigr)^q+\bigl(\tfrac{2}{3}\bigr)^q\Bigr)\right).
\]
(Indeed, when both are $1/3$ or both $2/3$, $S(q)=2\cdot 2^{-q}$; in the mixed cases, $S(q)=(1/3)^q+(2/3)^q$.) 

With the exponential mesh rate $\varepsilon_n=\exp(\lambda n+o(n))$ from Section~\ref{subsec:pressure}, we have (compare to Definition~\ref{def:multispectrum}):
\[
\tau(q)=\frac{\kappa(q)}{\lambda}, 
\quad \alpha(q)=\tau'(q)=\frac{\kappa'(q)}{\lambda}, 
\quad f(\alpha(q))=q\,\alpha(q)-\tau(q).
\]
The resulting $f$ is a smooth, strictly concave curve on $[\alpha_{\min},\alpha_{\max}]$, providing a clean analytical benchmark for numerical estimates (see Figure~\ref{fig:numerical}).

\emph{Note.} If one replaces the canonical normalization by $W_i\propto R_i$ with a deterministic renormalization per level (non-random denominator), $\kappa$ shifts by a constant but the shape of $f$ (strict concavity, support) is unchanged.

\subsection{Computational details for the numerical illustration}\label{app:B4}
We constructed the scale matrix $S$ of interval diameters by depth. 
For each depth $n$ with at least two finite entries in $S(n,:)$, we computed
\begin{equation}
Z_n(q)=\sum_{j} S(n,j)^q.
\end{equation}
The representative scale was
\[
\varepsilon_n \;=\;\exp\Bigl(\tfrac{1}{\#\{j\}} \sum_{j}\log S(n,j)\Bigr),
\]
the geometric mean of $S(n,:)$.
The slope of the linear fit of $\log Z_n(q)$ vs.\ $\log\varepsilon_n$ across retained depths gives $\tau(q)$. 
The Legendre transform was obtained by finite differences,
\[
\alpha=\frac{d\tau}{dq},\qquad f(\alpha)=q\alpha-\tau.
\]
All computations were performed directly from the raw $S$, without renormalization.
\clearpage
\section{Notation table}\label{App:C}
\begin{table}[h!]
\centering
\caption{Summary of notation used throughout the paper.}
\begin{tabular}{lp{0.7\textwidth}}
\toprule
\textbf{Symbol} & \textbf{Meaning} \\
\midrule
\multicolumn{2}{l}{\textbf{Tree / Geometry}} \\
\addlinespace[2pt]
$X=[0,1]$ & \raggedright Ambient unit interval. \\
$I(v)$ & \raggedright Interval (cylinder) associated with node/leaf $v$. \\
$s(v)=\diam(I(v))$ & \raggedright Scale (diameter) of $I(v)$. \\
$L_n$ & \raggedright Number of depth-$n$ leaves (cylinders). \\
$\mathcal{G}_{k,v}$ & \raggedright Galton--Watson subtree rooted at $v$ at step $k$. \\
$N_u$ & \raggedright Offspring number at node $u$ (finite support). \\
\addlinespace[4pt]
\multicolumn{2}{l}{\textbf{Contractions / Similarity maps}} \\
\addlinespace[2pt]
$R_{k,v}$ & \raggedright Contraction factor for subtree at $(k,v)$, with law $\nu_{s(v)}$ on $(0,s(v))$. \\
$r_{k,v}=R_{k,v}/s(v)$ & \raggedright Normalized contraction ratio in $w_{k,v}$. \\
$a_{k,v}$ & \raggedright Translation parameter; $a_{k,v}\in[0,s(v)-R_{k,v}]$ (non-anchored), $a_{k,v}=0$ (anchored). \\
$w_{k,v}(x)=a_{k,v}+r_{k,v}x$ & \raggedright Similarity embedding of a subtree in its parent interval. \\
\addlinespace[4pt]
\multicolumn{2}{l}{\textbf{Measures}} \\
\addlinespace[2pt]
$\mu_n$ & \raggedright Depth-$n$ cascade probability measure. \\
$\mu$ & \raggedright Limiting measure in Variant~A (non-anchored). \\
$T_{n,v}$ & \raggedright Affine rescaling $I(v)\to[0,1]$ for tangents. \\
$\mu_{v}^{(n)}$ & \raggedright Rescaled/tangent measure around $v$ at depth $n$. \\
\addlinespace[4pt]
\multicolumn{2}{l}{\textbf{Partition sums / Mesh scale}} \\
\addlinespace[2pt]
$Z_n(q)$ & \raggedright Partition function $\displaystyle Z_n(q)=\sum_{i=1}^{L_n}\mu\!\bigl(I_i^{(n)}\bigr)^q$. \\
$\varepsilon_n$ & \raggedright Mesh scale at depth $n$ (e.g., max or geometric mean diameter). \\
$\kappa(q)$ & \raggedright Exponential growth rate $\displaystyle \kappa(q)=\lim_{n\to\infty}\tfrac{1}{n}\log Z_n(q)$. \\
$\lambda$ & \raggedright Mesh decay rate $\displaystyle \lambda=\lim_{n\to\infty}\tfrac{1}{n}\log \varepsilon_n<0$. \\
\addlinespace[4pt]
\multicolumn{2}{l}{\textbf{Spectral / Multifractal quantities}} \\
\addlinespace[2pt]
$\tau(q)$ & \raggedright $L^q$-spectrum $\displaystyle \tau(q)=\lim_{n\to\infty}\frac{\log Z_n(q)}{\log \varepsilon_n}=\kappa(q)/\lambda$. \\
$\alpha(q)=\tau'(q)$ & \raggedright Typical local scaling exponent associated with $q$. \\
$f(\alpha)$ & \raggedright Multifractal spectrum $f(\alpha)=\inf_{q\in\mathbb{R}}\{q\alpha-\tau(q)\}=\tau^\ast(\alpha)$. \\
$E(\alpha)$ & \raggedright Level set $\displaystyle E(\alpha)=\{x:\lim_{r\downarrow0}\tfrac{\log\mu(B(x,r))}{\log r}=\alpha\}$. \\
\addlinespace[4pt]
\multicolumn{2}{l}{\textbf{Cascade weights (one-step objects)}} \\
\addlinespace[2pt]
$X_i$ & \raggedright Positive transform of contractions (e.g., $X_i=R_i^{\beta}$). \\
$W_i=\dfrac{X_i}{\sum_j X_j}$ & \raggedright Canonical normalized sibling weights (Mandelbrot cascade). \\
$S(q)=\sum_i W_i^{\,q}$ & \raggedright One-step log-sum-exp generating function. \\
\addlinespace[4pt]
\multicolumn{2}{l}{\textbf{Domains / Parameters}} \\
\addlinespace[2pt]
$(q_-,q_+)$ & \raggedright Maximal open interval where $\tau(q)$ is finite (and differentiable in the interior). \\
$\tau^\ast$ & \raggedright Legendre transform of $\tau$: $\tau^\ast(\alpha)=\inf_q (q\alpha-\tau(q))$. \\
Variant~A / B & \raggedright Non-anchored (free translation) / Anchored ($a_{k,v}=0$) construction. \\
\bottomrule
\end{tabular}
\end{table}

\end{appendices}

% --- Bibliography (single occurrence) ---

%\bibliography{sn-bibliography}    % no “.bib” extension

\end{document}